\DeclareMathOperator{\capacity}{Cap}
\DeclareMathOperator{\dist}{dist}
\DeclareMathOperator{\Lip}{Lip}
\newtheorem{theorem}{Theorem}[section]
\newtheorem{lemma}[theorem]{Lemma}
\newtheorem{definition}[theorem]{Definition}
\numberwithin{equation}{section}
\def\Xint#1{\mathchoice 
 {\XXint\displaystyle\textstyle{#1}}%
{\XXint\textstyle\scriptstyle{#1}}%
{\XXint\scriptstyle\scriptscriptstyle{#1}}%
 {\XXint\scriptscriptstyle\scriptscriptstyle{#1}}%
 \!\int}
\def\XXint#1#2#3{{\setbox0=\hbox{$#1{#2#3}{\int}$}
 \vcenter{\hbox{$#2#3$}}\kern-.5\wd0}}
 \def\dashint{\Xint-}
\title[Capacity in Besov and Triebel-Lizorkin spaces]{Capacity in Besov and Triebel-Lizorkin spaces with generalized smoothness}
\author{Nijjwal Karak and Debarati Mondal}
\thanks{We would like to thank the reviewer for the valuable comments and suggestions. N.K. would like to thank DST-SERB (SRG/2021/000118) and D.M. would like to thank BITS Pilani (ID no. 2020PHXF0038H) for the financial support. }
\begin{document}
\begin{abstract}
We prove a lower bound estimate for capacities in Haj\l asz-Besov, Haj\l asz-Triebel-Lizorkin and Haj\l asz-Sobolev spaces with generalized smoothness defined on metric spaces in terms of Netrusov-Hausdorff content or Hausdorff content. These results are improvements of the results obtained in \cite{LYY21}. 
\end{abstract}
\maketitle
\indent Keywords: Haj\l asz-Besov space, Haj\l asz-Triebel-Lizorkin space, capacity, Hausdorff content, Netrusov-Hausdorff content.\\
\indent 2010 Mathematics Subject Classification: 31E05, 31B15.
\section{Introduction}
Function spaces with generalized smoothness have received a lot of attention in recent years due to its applications in various fields. In \cite{FL06}, the authors have studied the generalized Besov spaces $B^{(\sigma,N)}_{p,q}(\mathbb{R}^n)$ and Triebel-Lizorkin spaces $F^{(\sigma,N)}_{p,q}(\mathbb{R}^n)$ for the full range of parameters, where $\sigma$ is a weight sequence of positive numbers and $N$ is a sequence of strictly increasing positive numbers connected a suitable partition of unity, and they are used to replace the classical smoothness $\{2^{js}\}.$ Generalized Besov and Triebel-Lizorkin spaces also have been studied where smoothness is given by a parameter function, see \cite{CF88, CM04, CM04a, CF06, CL06, CK09}. Recently Li, Yang and Yuan have introduced Haj\l asz-Sobolev spaces $M^{\phi, p}(X)$, Haj\l asz-Besov spaces $N^{\phi}_{p,q}(X)$ and Haj\l asz-Triebel-Lizorkin spaces $M^{\phi}_{p,q}(X)$ with generalized smoothness on metric spaces, \cite{LYY22}. These spaces have been defined using Haj\l asz gradients which were first introduced by Haj\l asz \cite{Haj96} and later defined for fractional scales independently by Hu \cite{Hu03} and Yang \cite{Yan03} and then generalized to sequence of gradients to define Besov and Triebel-Lizorkin spaces \cite{KYZ11}. Please see Section 2 for the definition of these spaces.\\

In this paper we study the relation between the capacity defined on Haj\l asz-Sobolev spaces $M^{\phi, p}(X)$, Haj\l asz-Besov spaces $N^{\phi}_{p,q}(X)$ or Haj\l asz-Triebel-Lizorkin spaces $M^{\phi}_{p,q}(X)$ with generalized smoothness and Netrusov-Hausdorff content (or Hausdorff content). Haj\l asz-Sobolev capacity was first studied in \cite{KM96} and later it was used in \cite{KL02} to prove the existence of Lebesgue points of functions in Haj\l asz-Sobolev space $M^{1,p}(X),$ $p\in(1,Q],$ outside a set of Haj\l asz-Sobolev capacity zero, where $X$ is a doubling metric measure space and $Q$ is the doubling dimension of $X.$ This result leads to a series of related work on other function spaces such as fractional Haj\l asz-Sobolev spaces \cite{Pro06}, Orlicz-Sobolev spaces \cite{Moc11} as well as Haj\l asz-Besov and Haj\l asz-Triebel-Lizorkin spaces \cite{HKT17}. Let us recall here the definition of the capacities on these spaces from \cite{LYY21}. 
\begin{definition}
Let $0<s<\infty,$ $0<p,q< \infty,$ $\phi\in\mathcal{A}$ and $\mathcal{F}\in\{M^{\phi,p}, M^{\phi}_{p,q}, N^{\phi}_{p,q}\}.$ The $\mathcal{F}$-capacity of a set $E\subset X$ is
$$\capacity_{\mathcal{F}}(E)=\inf\{\Vert u\Vert_{\mathcal{F}(X)}^p: u\in\mathcal{G_F}(E)\},$$
where
$$\mathcal{G_F}(E)=\{u\in\mathcal{F}(X): u\geq 1 \,\,\text{on a neighbourhood of}\,\, E\}.$$
\end{definition}
Lower bound and upper bound estimates for Haj\l asz-Besov capacity ($N^s_{p,q}$-capacity) were found in \cite{Nuu16} in terms of generalized Netrusov-Hausdorff content, whereas the estimates for Haj\l asz-Triebel-Lizorkin capacity ($M^s_{p,q}$-capacity) were found in \cite{Kar20} in terms of generalized Hausdorff measure. These results were also extended in \cite{LYY21} for spaces with generalized smoothness and recently some of them have been improved in \cite{KM}. In this paper, we improve the results of \cite{LYY21} on the relation between the capacities and the Netrusov-Hausdorff content (or Hausdorff content) by relaxing the assumptions on the metric space. Here is our first result:
\begin{theorem}\label{main theorem_Besov}
Let $\phi\in \mathcal{A}_0$, $0<p<\infty$, $0<q<\infty$ and $\omega$ be any given function of admissible growth such that, for any $L\in \mathbb{Z}_{+}$, 
\begin{equation*}
\sum_{k\geq L}\frac{1}{\omega(2^{-k})}<\infty.
\end{equation*}
Let $x_0\in X,$ $0<R<\infty.$ Then there are constants $C>0$ and $c>0$ such that, for any compact set $E\subset B(x_0,R),$
\begin{equation}
\mathcal{H}^{h_\omega, q/p}_{cR}(E)\leq C \capacity_{N^\phi_{p,q}}(E)
\end{equation}
where, for any $r\in (0,R],$ $h_\omega(r)=[\phi(r)\omega(r)]^p.$
\end{theorem}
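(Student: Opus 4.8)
The strategy is the standard one for lower capacity bounds: fix an arbitrary admissible function and estimate the content of $E$ by a constant multiple of its norm to the power $p$, then take the infimum. So let $u\in N^\phi_{p,q}(X)$ with $u\ge 1$ on an open set $U\supset E$, and fix a Besov $\phi$-gradient $(g_k)_k$ of $u$ whose seminorm is within a factor $2$ of optimal, so that $\|u\|_{L^p}^p+(\sum_k\|g_k\|_{L^p}^q)^{p/q}\le C\,\|u\|_{N^\phi_{p,q}}^p$. The point of working with a genuine neighbourhood $U$ (rather than quasicontinuous representatives) is that for every $x\in E$ the small balls $B(x,r)$ eventually lie in $U$, whence $u_{B(x,r)}\ge 1$; I split $E$ into the points whose averages $u_{B(x,r)}$ stay above $1/2$ all the way up to the top scale $r\approx cR$, and the rest.

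For a point $x$ of the second type, let $2^{-\ell}$ be the largest dyadic radius $\le cR$ at which $u_{B(x,2^{-\ell})}\le 1/2$. Telescoping the averages and invoking the Haj\l asz--Besov gradient inequality in the integrated form $|u_{B(x,2^{-j-1})}-u_{B(x,2^{-j})}|\le C\,\phi(2^{-j})\dashint_{B(x,2^{-j})}g_j\,d\mu$ gives
\[
\tfrac12\le u(x)-u_{B(x,2^{-\ell})}\le C\sum_{j\ge\ell}\phi(2^{-j})\dashint_{B(x,2^{-j})}g_j\,d\mu .
\]
Now the hypothesis $S:=\sum_{j}\omega(2^{-j})^{-1}<\infty$ enters through a pigeonhole step: if every term on the right were smaller than $\tfrac{1}{2CS}\,\omega(2^{-j})^{-1}$ the whole sum would be $<\tfrac12$, so there is a scale $\nu=\nu(x)\ge\ell$ with
\[
\phi(2^{-\nu})\,\omega(2^{-\nu})\,\dashint_{B(x,2^{-\nu})}g_\nu\,d\mu\ \ge\ c_0 ,
\]
for $c_0=\tfrac{1}{2CS}$. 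This is exactly the mechanism that produces the gauge $h_\omega(r)=[\phi(r)\omega(r)]^p$: the weight $\omega$ is what is needed to sum the telescoping series, and it attaches itself to $\phi$ at the selected scale.

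Writing $E_\nu=\{x: \nu(x)=\nu\}$, I would cover each $E_\nu$ by balls $B(x_i,2^{-\nu})$ with $x_i\in E_\nu$ and apply the $5r$-covering lemma to extract a subfamily with the balls $B(x_i,2^{-\nu})$ pairwise disjoint and $E_\nu\subset\bigcup_i B(x_i,5\cdot2^{-\nu})$. Using the selected lower bound together with H\"older's inequality, $\dashint_{B(x_i,2^{-\nu})}g_\nu\le(\dashint_{B(x_i,2^{-\nu})}g_\nu^p)^{1/p}$, one gets $\mu(B(x_i,2^{-\nu}))/[\phi(2^{-\nu})\omega(2^{-\nu})]^p\le c_0^{-p}\int_{B(x_i,2^{-\nu})}g_\nu^p\,d\mu$; summing over the \emph{disjoint} balls and passing from $B(x_i,2^{-\nu})$ to $B(x_i,5\cdot2^{-\nu})$ by doubling yields the scale-by-scale estimate
\[
\sum_i\frac{\mu(B(x_i,5\cdot2^{-\nu}))}{h_\omega(5\cdot2^{-\nu})}\ \le\ C\int g_\nu^p\,d\mu .
\]
Because the Netrusov--Hausdorff content groups the covering balls by dyadic scale and takes the outer $\ell^{q/p}$-sum of these scale contributions, raising the target inequality to the power $q/p$ turns it into $\sum_\nu(\sum_i\mu(B_i)/h_\omega(r_i))^{q/p}\le C\sum_\nu(\int g_\nu^p)^{q/p}$, and the right-hand side is precisely $C(\sum_\nu\|g_\nu\|_{L^p}^q)$, i.e. the $q$-th power of the Besov $\ell^q(L^p)$ seminorm. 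Thus the scale-by-scale bound sums to the desired estimate, and the points of the first type are absorbed into the $\|u\|_{L^p}^p$ term (there $u_{B(x,cR)}\ge1/2$ forces a controlled amount of $L^p$-mass near $x$).

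The step I expect to be the main obstacle is the scale-by-scale estimate carried out with doubling alone: I must bound $\sum_i\mu(B_i)$ rather than the number of balls, which is the whole point, since counting balls would require a lower mass bound (reverse doubling or regularity) of the kind assumed in \cite{LYY21}. Keeping to measures of balls is what lets the argument go through under the relaxed hypotheses, but it forces care in the H\"older and disjointness bookkeeping and in checking that the doubling dilation from $2^{-\nu}$ to $5\cdot 2^{-\nu}$, the admissible growth of $\phi$ and $\omega$, and the single use of $\sum_j\omega(2^{-j})^{-1}<\infty$ all contribute only scale-independent constants, so that the constants $C$ and $c$ in the statement are genuinely uniform.
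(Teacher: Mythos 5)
Your architecture --- telescoping across dyadic scales, a pigeonhole step driven by $\sum_k\omega(2^{-k})^{-1}<\infty$ that attaches the weight $\omega$ to $\phi$ at one selected scale per point, then a $5r$-covering and a scale-by-scale summation with exponent $q/p$ --- is exactly the paper's. The genuine gap is that, as written, your argument only works for $p\ge 1$, while the theorem is stated for all $0<p<\infty$, and the whole point of the median machinery in the paper is to cover $p<1$. Concretely: (i) for $p<1$ a function in $N^{\phi}_{p,q}(X)$ need not be locally integrable, so the averages $u_{B(x,r)}$ you telescope may not exist; the paper works with $\gamma$-medians and first reduces, via Lemma~\ref{phi capacity lipschitz}, to locally Lipschitz admissible functions so that pointwise values and limits of medians make sense at \emph{every} point of the (possibly null) set $E$. (ii) Your ``integrated form'' $|u_{B(x,2^{-j-1})}-u_{B(x,2^{-j})}|\le C\phi(2^{-j})\dashint_{B(x,2^{-j})}g_j\,d\mu$ is not a direct consequence of the definition, because $g_j$ only controls $|u(y)-u(z)|$ for pairs with $2^{-j-1}\le d(y,z)<2^{-j}$; this is the content of the Poincar\'e-type Lemma~\ref{phi integral average}, whose usable conclusion carries the $L^{\lambda}$-average $\big(\dashint g_j^{\lambda}\big)^{1/\lambda}$ of the gradient (any $\lambda>0$) rather than the $L^1$-average. (iii) Your subsequent H\"older step $\dashint g_\nu\le\big(\dashint g_\nu^{p}\big)^{1/p}$ reverses for $p<1$, and so does the Jensen step behind ``$u_{B(x,cR)}\ge 1/2$ forces $L^p$-mass'' (for $p<1$ one can have $\dashint|u|\ge 1/2$ with $\dashint|u|^{p}$ arbitrarily small). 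All three failures are repaired at once by quoting Lemma~\ref{phi integral average}(ii), which outputs $\phi(2^{-k})\big(\dashint g_k^{p}\big)^{1/p}$ directly; your closing worry about the covering and disjointness bookkeeping is not where the difficulty lies.

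Apart from this, your treatment of the top scale is genuinely different from the paper's and is viable in principle (it is close to Nuutinen's original argument): you split $E$ according to whether the averages stay above $1/2$ up to radius $\approx cR$ and absorb the first group into $\|u\|_{L^p}^p$. The paper instead multiplies the admissible function by a Lipschitz cutoff vanishing on a ball near $x_0$, checks that the product still has a controlled gradient sequence (this is where the hypotheses \eqref{A_0} on $\phi\in\mathcal{A}_0$ are used), and then bounds the top-scale median by the gradient at that scale, since the modified function vanishes within distance $\approx R$ of every point of $E$. Your route avoids the cutoff computation at the cost of a second covering family at the top scale; either works once the $p<1$ issues above are handled by medians.
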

Note that, this result is an improvement of \cite[Theorem 7]{LYY21} in the sense that here we do not require any additional assumption of the existence of some non-empty annulus on the underlying metric space which was required for Theorem 7 of \cite{LYY21}. Also for Triebel-Lizorkin space, the lower bound is obtained, without the additional assumption on the underlying metric space, in terms of Hausdorff measure and the result turns out to be true as well for Sobolev spaces with generalized smoothness $M^{\phi,p}=M^{\phi}_{p,\infty}.$ 
\begin{theorem}\label{main theorem_Triebel}
Let $\phi\in \mathcal{A}_0$, $0<p<\infty$, $0<q<\infty$ and $\omega$ be any given function of admissible growth such that, for any $L\in \mathbb{Z}_{+}$, 
\begin{equation*}
\sum_{k\geq L}\frac{1}{\omega(2^{-k})}<\infty.
\end{equation*}
Let $x_0\in X,$ $0<R<\infty.$ Then there exist constants $c$ and $C$ such that, for any compact set $E\subset B(x_0,R),$
 \begin{equation*}
 H^{h_{\omega}}_{cR}(E)\leq C \capacity_{M^{\phi}_{p,q}}(E)
 \end{equation*}
where, for any $r\in (0,R],$ $h_\omega(r)=[\phi(r)\omega(r)]^p.$
\end{theorem}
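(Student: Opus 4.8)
The plan is to adapt the Maz'ya-type argument behind Theorem~\ref{main theorem_Besov} to the $L^p(\ell^q)$ structure of $M^\phi_{p,q}$, the payoff being that the ordinary Hausdorff content $H^{h_\omega}$ (rather than the scale-grouped Netrusov--Hausdorff content) is exactly what the Triebel--Lizorkin energy controls. Fix any $u\in\mathcal{G}_{M^\phi_{p,q}}(E)$ together with a $\phi$-gradient $\vec g=(g_k)_{k\in\mathbb{Z}}$, and assume $\|u\|_{M^\phi_{p,q}}<\infty$. After truncating to $[0,1]$ and multiplying by a Lipschitz cutoff supported in $B(x_0,2R)$ that equals $1$ on $B(x_0,R)$, I may assume $0\le u\le1$ and $u$ is supported in $B(x_0,2R)$; both operations replace $\vec g$ by at most a bounded multiple of $\vec g$ plus a cutoff-gradient term, so $\|u\|_{M^\phi_{p,q}}$ is multiplied by at most a constant. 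Confining the support is precisely the device that lets me dispense with the non-empty-annulus hypothesis of \cite{LYY21}: the averages $u_{B(x,r)}$ are now automatically small once $r\gtrsim R$, so a drop scale always exists below $cR$.

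I then localise at the level of points. Since $u\ge1$ on an open neighbourhood of $E$ and $u\le1$, in fact $u\equiv1$ there, so every $x\in E$ is a Lebesgue point with $u(x)=1$. For such $x$ I telescope the averages $u_{B(x,2^{-k})}$ and estimate consecutive differences by the $\phi$-Haj\l asz inequality to get $\tfrac12\le C\sum_{k\ge k_x}\phi(2^{-k})\dashint_{B(x,2^{-k})}g_k\,d\mu$, where $2^{-k_x}\approx r_x\le cR$ is the first scale at which the average has dropped to $1/2$. Here the hypothesis $\sum_{k\ge L}1/\omega(2^{-k})<\infty$ enters through a weighted pigeonhole: writing $\phi(2^{-k})=\frac{1}{\omega(2^{-k})}\,\phi(2^{-k})\omega(2^{-k})$ and summing against the finite weight $1/\omega(2^{-k})$ forces a single scale $r_x=2^{-k(x)}\le cR$ to carry a definite proportion of the sum, which after raising to the $p$-th power yields a ball $B_x=B(x,r_x)$ with $h_\omega(r_x)=[\phi(r_x)\omega(r_x)]^p\le C\int_{B_x}\big(\sum_k g_k^q\big)^{p/q}\,d\mu$. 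This is the one step where the generalized-smoothness normalization of $\phi\in\mathcal{A}_0$ (absorbing the measure of the ball) must be used exactly as in Section~2.

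With the family $\{B_x\}_{x\in E}$ in hand I apply the standard $5r$-covering lemma to extract a countable pairwise disjoint subfamily $\{B_i=B(x_i,r_i)\}$ with $E\subset\bigcup_i 5B_i$ and $r_i\le cR$. Using that $h_\omega$ is doubling (a consequence of $\phi\in\mathcal{A}_0$ and the admissible growth of $\omega$) to replace $5B_i$ by $B_i$, and then the disjointness of the $B_i$, I sum the per-ball estimate: $H^{h_\omega}_{cR}(E)\le\sum_i h_\omega(5r_i)\le C\sum_i h_\omega(r_i)\le C\sum_i\int_{B_i}\big(\sum_k g_k^q\big)^{p/q}\,d\mu\le C\int_X\big(\sum_k g_k^q\big)^{p/q}\,d\mu\le C\|u\|_{M^\phi_{p,q}}^p$. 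Taking the infimum over $u\in\mathcal{G}_{M^\phi_{p,q}}(E)$ gives the claim. It is exactly the $\ell^q$-inside-$L^p$ ordering of the Triebel--Lizorkin (quasi-)norm that makes this last chain close with disjoint balls, and hence with the plain content $H^{h_\omega}$; the Besov ordering $\ell^q$-outside-$L^p$ does not permit this, which is why Theorem~\ref{main theorem_Besov} is forced to use the Netrusov--Hausdorff content with exponent $q/p$.

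Two points need care. The Sobolev endpoint $M^{\phi,p}=M^\phi_{p,\infty}$ is covered verbatim with $\ell^q$ replaced by $\sup_k$, since the telescoping uses only one gradient $g_k$ per scale. The main obstacle is the per-ball estimate of the second paragraph: running the telescoping and the $\omega$-weighted pigeonhole uniformly in $x\in E$ while (i) keeping the selected radius $r_x\le cR$ without invoking an annulus, and (ii) arranging the bound so that the measure of $B_x$ is absorbed into $h_\omega(r_x)$ and the disjoint summation against $\big(\sum_k g_k^q\big)^{p/q}$ closes. For $0<p<1$ or $0<q<1$ the rearrangements must be tracked with the quasi-norm inequalities (the elementary $(\sum a_i)^t\le\sum a_i^t$ for $t\le1$), which is routine but must be done with care.
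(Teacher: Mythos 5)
Your overall telescoping--pigeonhole--$5B$-covering skeleton matches the paper's, and your explanation of why the $\ell^q$-inside-$L^p$ ordering yields the plain content $H^{h_\omega}$ while Besov forces the Netrusov--Hausdorff content is correct. But your localization device contains a genuine gap, and it sits exactly at the point you yourself flag as ``the main obstacle.'' You multiply $u$ by a cutoff supported in $B(x_0,2R)$ and claim that compact support makes the averages $u_{B(x,r)}$ ``automatically small once $r\gtrsim R$, so a drop scale always exists below $cR$.'' That is false in a general doubling space: it requires $\mu\bigl(B(x,cR)\setminus B(x_0,2R)\bigr)\gtrsim\mu(B(x,cR))$, which is precisely the non-empty-annulus / measure-density hypothesis of \cite{LYY21} that the theorem is supposed to remove. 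In the extreme case $X=B(x_0,2R)$ your cutoff is identically $1$, one may take $u\equiv 1$ with all $g_k\equiv 0$, and your final chain $H^{h_\omega}_{cR}(E)\leq C\int_X\bigl(\sum_k g_k^q\bigr)^{p/q}d\mu$ would give $H^{h_\omega}_{cR}(E)=0$, which is wrong; note that your closing inequality uses only the gradient part of $\|u\|_{M^\phi_{p,q}}^p$, so the $L^p$ term cannot rescue it.

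The paper's fix is the opposite localization: assuming $x_0\notin E$, it multiplies $v$ by a Lipschitz $\psi$ that vanishes on the \emph{small} ball $B(x_0,\tfrac14\dist(x_0,E))$ around the center and equals $1$ outside $B(x_0,\tfrac12\dist(x_0,E))$, and builds a gradient sequence $(g_k)$ of $u=v\psi$ that also vanishes on that small ball (the sequence is spliced from two competing gradients $\rho_k,\sigma_k$ at an index $i$ with $2^{i-1}\le L<2^i$, and the two tails are summable precisely by the conditions \eqref{A_0} defining $\mathcal{A}_0\subset\mathcal{A}_r$). Since every $x\in E$ has a point $z$ with $u(z)=0$ and $g_{k_0}(z)=0$ at distance $d(x,z)\lesssim R$, the Haj\l asz inequality gives $|u(y)|\le\phi(2^{-k_0})g_{k_0}(y)$ pointwise near $x$, which bounds the median $|m^\gamma_u(B(x,2^{-k_0}))|$ at the starting scale $2^{-k_0}\approx R$ directly by the gradient --- this is the ``drop'' your argument lacks, obtained inside $B(x_0,R)$ rather than outside it. Two smaller points: the paper first reduces to locally Lipschitz competitors via Lemma \ref{phi capacity lipschitz} (median-convolution approximation) so that every $x\in E$ is a generalized Lebesgue point, and it telescopes $\gamma$-medians rather than integral averages, since functions in these spaces need not be locally integrable; your truncation to $[0,1]$ sidesteps the integrability issue but not the localization one.
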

\begin{theorem}\label{main theorem_Sobolev}
Let $\phi\in \mathcal{A}_0$, $0<p<\infty$, and $\omega$ be any given function of admissible growth such that, for any $L\in \mathbb{Z}_{+}$, 
\begin{equation*}
\sum_{k\geq L}\frac{1}{\omega(2^{-k})}<\infty.
\end{equation*} 
Let $x_0\in X,$ $0<R<\infty.$ Then there exist constants $c$ and $C$ such that, for any compact set $E\subset B(x_0,R),$
 \begin{equation*}
 H^{h_{\omega}}_{cR}(E)\leq C \capacity_{M^{\phi,p}}(E)
 \end{equation*}
where, for any $r\in (0,R],$ $h_\omega(r)=[\phi(r)\omega(r)]^p.$
\end{theorem}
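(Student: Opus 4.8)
The plan is to obtain Theorem~\ref{main theorem_Sobolev} by running the proof of Theorem~\ref{main theorem_Triebel} at the endpoint exponent $q=\infty$, using the identification $M^{\phi,p}=M^{\phi}_{p,\infty}$. A single Haj\l asz $\phi$-gradient $g$ of $u$ gives the admissible constant sequence $g_k\equiv g$, while conversely the pointwise supremum $\sup_k g_k$ of a $\phi$-gradient sequence is itself a $\phi$-gradient; hence $\capacity_{M^{\phi,p}}(E)=\capacity_{M^{\phi}_{p,\infty}}(E)$, and the only change from Theorem~\ref{main theorem_Triebel} is that the inner $\ell^q$-norm of the gradient sequence becomes a supremum, which plays no role below. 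Fix $\varepsilon>0$ and an admissible $u\in\mathcal{G}_{M^{\phi,p}}(E)$, so that $u\ge 1$ on an open neighbourhood $U\supset E$ and $u$ has a $\phi$-gradient $g$ with $\|g\|_{L^p}^p\le\capacity_{M^{\phi,p}}(E)+\varepsilon$; truncating to $\min\{\max\{u,0\},1\}$ we may assume $0\le u\le 1$.

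Write $u_{B}=\dashint_{B}u\,d\mu$. The heart of the matter is a one-point oscillation estimate. Since $u\ge 1$ on the open set $U$, for every $x\in E$ the averages satisfy $u_{B(x,r)}\ge 1$ once $B(x,r)\subset U$, so telescoping the dyadic averages from the fixed top scale $R$ down to such a small scale yields $1\le u_{B(x,R)}+\sum_{k\ge 0}|u_{B(x,2^{-k-1}R)}-u_{B(x,2^{-k}R)}|$. I would then split $E$ into two types of points. If $u_{B(x,R)}\ge\tfrac12$, a distributional (Chebyshev) estimate together with $0\le u\le 1$ gives $\mu(B(x,R))\le C\int_{B(x,R)}|u|^p\,d\mu$; these ``type-I'' points lie in $B(x_0,2R)$, and by the doubling property finitely many radius-$R$ balls chosen from them suffice, so their total contribution to the content is at most $C\|u\|_{L^p}^p$ (with $C$ depending on the fixed $R$ through $h_\omega(R)$). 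Otherwise $\sum_{k\ge 0}|u_{B(x,2^{-k-1}R)}-u_{B(x,2^{-k}R)}|\ge\tfrac12$, and here the hypothesis $\sum_{k}\omega(2^{-k})^{-1}<\infty$ is used: inserting the weights $\omega(2^{-k}R)$ and applying the pigeonhole principle produces a scale $r_x=2^{-k(x)}R\le R$ with $\omega(r_x)\,|u_{B(x,r_x/2)}-u_{B(x,r_x)}|\ge c_0$. The $\phi$-Poincar\'e inequality attached to the Haj\l asz gradient (valid for every $0<p<\infty$) then upgrades this to the bad-scale bound
\[
h_\omega(r_x)\,\dashint_{B(x,r_x)}g^p\,d\mu\ge c,\qquad h_\omega(r_x)=[\phi(r_x)\omega(r_x)]^p .
\]

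To finish, I would cover $E$ by the balls $\{B(x,r_x)\}_{x\in E}$ produced above, use the $5r$-covering lemma to extract a pairwise disjoint subfamily $\{B(x_i,r_{x_i})\}$ whose fivefold dilates still cover $E$, and add up the content contributions. By the doubling of $h_\omega$ and of $\mu$ the dilate $5B(x_i,r_{x_i})$ contributes comparably to $B(x_i,r_{x_i})$, while the bad-scale bound says precisely that the (codimensional) Hausdorff-content weight $\mu(B(x_i,r_{x_i}))/h_\omega(r_{x_i})$ is at most $c^{-1}\int_{B(x_i,r_{x_i})}g^p\,d\mu$; disjointness then bounds the total by $c^{-1}\|g\|_{L^p}^p$. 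Combining the type-I and type-II estimates gives $H^{h_\omega}_{cR}(E)\le C(\|u\|_{L^p}^p+\|g\|_{L^p}^p)=C\|u\|_{M^{\phi,p}}^p\le C(\capacity_{M^{\phi,p}}(E)+\varepsilon)$, and letting $\varepsilon\to 0$ completes the argument.

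The step I expect to be the main obstacle is the anchoring of the telescoping sum at the fixed scale $R$. In \cite{LYY21} the nonempty-annulus hypothesis was what forced the large-scale averages $u_{B(x,R)}$ to be small, eliminating the type-I points; removing that hypothesis forces one instead to absorb those points into the $L^p$-part $\|u\|_{L^p}^p$ of the norm, uniformly over $x\in E\subset B(x_0,R)$, which is exactly the device that yields the improvement. A second delicate point is the measure bookkeeping in the final summation: one must keep the factors $\mu(B(x_i,r_{x_i}))$ balanced so that the selected balls sum against the gradient energy $\|g\|_{L^p}^p$ and not against a spurious power of the measure, and this is where the codimensional form of the content is essential.
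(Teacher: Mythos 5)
Your overall architecture is sound and genuinely different from the paper's: instead of multiplying the test function by a cutoff vanishing near $x_0$ and building an explicit $\phi$-Haj\l asz gradient of the product (which is how the paper controls the top-scale term and absorbs the $L^p$ part of the norm into the gradient), you anchor the telescoping at scale $R$ and split $E$ into points where the top-scale average is already large (absorbed into $\|u\|_{L^p}^p$ via Chebyshev, using $E\subset B(x_0,R)$ and doubling) and points where the telescoped oscillation is large (handled by pigeonhole against $\sum_k\omega(2^{-k})^{-1}<\infty$). That splitting is a legitimate replacement for the paper's cutoff device and, where it works, is more elementary.

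However, there is a genuine gap in the range $0<p<1$: you run the telescoping with integral averages $u_B=\dashint_B u\,d\mu$ and invoke a Poincar\'e inequality of the form $|u_{B(x,r/2)}-u_{B(x,r)}|\leq C\phi(r)\bigl(\dashint_{B(x,2r)}g^p\,d\mu\bigr)^{1/p}$, asserting it is ``valid for every $0<p<\infty$.'' It is not. For a Haj\l asz gradient one only gets $\dashint_B|u-u_B|\,d\mu\leq C\phi(r)\dashint_B g\,d\mu$, and $\dashint_B g$ is not controlled by $\bigl(\dashint_B g^p\bigr)^{1/p}$ when $p<1$; concretely, on $[0,1]$ with $\phi(t)=t$ the tent function $u$ of height $1$ supported on an interval of length $2\epsilon$ admits the gradient $g=\epsilon^{-1}\chi_{[1/2-\epsilon,1/2+\epsilon]}$, for which $\inf_c\dashint|u-c|\sim\epsilon$ while $\bigl(\dashint g^p\bigr)^{1/p}\sim\epsilon^{1/p-1}=o(\epsilon)$ once $p<1/2$. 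Your truncation to $0\leq u\leq 1$ does not repair this (the exceptional set where $g$ is large still contributes a fixed fraction of the average). This is precisely why the paper works with $\gamma$-medians throughout: Lemma \ref{gamma median 1} and Lemma \ref{phi integral average}(i) give $\inf_{c}m^{\gamma}_{|u-c|}(B(x,2^{-k}))\leq C\phi(2^{-k})\bigl(\dashint_{B(x,2^{-k+1})}g^{\lambda}\bigr)^{1/\lambda}$ for every $\lambda>0$, and the median ignores spikes of $u$ on small sets. The good news is that your argument survives the repair verbatim: $m^\gamma_u(B)\geq 1$ when $u\geq1$ a.e.\ on $B$ (anchoring), $m^\gamma_u(B(x,R))\geq\tfrac12$ still yields $\mu(B(x,R))\leq C\int_{B(x,R)}u^p\,d\mu$ (type-I), and the median telescoping plus pigeonhole gives the same bad-scale bound (type-II). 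So you should replace all integral averages by $\gamma$-medians and cite the median Poincar\'e inequality; as written, the key type-II step fails for small $p$, which is part of the stated range of the theorem.
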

\section{Notations and Preliminaries}
We assume throughout the article that $X=(X,d,\mu)$ is a doubling metric measure space equipped with a metric $d$ and a doubling Borel regular measure $\mu.$ Note that a Borel regular measure $\mu$ on a metric space $(X,d)$ is said to be a \textit{doubling measure} if every ball in $X$ has positive and finite measure and there exist a constant $c_d\geq 1$ such that
$\mu(B(x,2r))\leq c_d\,\mu(B(x,r))$
holds for each $x\in X$ and $r>0.$ 
\subsection{Smoothness weight sequences and functions}
We first recall some tools used to describe the generalized smoothness. Smoothness in terms of sequences can be found in \cite{FL06, KL87} and in terms of sequences can be found in \cite{LYY21, LYY22}. 
\begin{definition}
Let $E\in\{\mathbb{Z},\mathbb{Z_+}\}.$ A sequence of positive numbers $\{\sigma_j\}_{j\in E}$ is said to be admissible if there exist two positive constants $d_0$ and $d_1$ such that for any $j\in E$, $d_0\sigma_j\leq \sigma_{j+1}\leq d_1\sigma_j.$
\end{definition}
\begin{definition}
A continuous function $\phi:[0,\infty)\rightarrow [0,\infty)$ is said to be of admissible growth if $\{\phi(2^j)\}_{j\in \mathbb{Z}}$ is an admissible sequence and $\phi(t)\sim\phi(2^k) $ for any $k\in \mathbb{Z}$ and $t\in[2^k,2^{k+1})$ with the positive equivalence constants independent of both $t$ and $k$.
\end{definition}
Here we recall some indices from \cite{LYY21}. For any given sequence $\sigma:=\{\sigma_k\}_{k\in\mathbb{Z}}$ of positive numbers and any given function $\phi:[0,\infty)\rightarrow [0,\infty)$, let\\
$$\alpha_\phi:=\max\{\alpha^-_\phi,\alpha^+_\phi\}:=\max\left\{\limsup_{k\rightarrow-\infty}\frac{\phi(2^k)}{\phi(2^{k+1})},\limsup_{k\rightarrow\infty}\frac{\phi(2^k)}{\phi(2^{k+1})}\right\}$$ \\
and\\
$$\beta_\phi:=\max\{\beta^-_\phi,\beta^+_\phi\}:=\max\left\{\limsup_{k\rightarrow-\infty}\frac{\phi(2^{k+1})}{\phi(2^k)},\limsup_{k\rightarrow\infty}\frac{\phi(2^{k+1})}{\phi(2^k)}\right\}.$$

Throughout this article, we denote by $\mathcal{A}$ the class of all continuous and almost increasing functions $\phi:[0,\infty)\rightarrow [0,\infty)$ such that  $\phi(0)=0, \phi(1)=1,$ and $\{\phi(2^k)\}_{k\in \mathbb{Z}}$ is admissible. We also denote by $\mathcal{A}_\infty$ the class of all functions $\phi\in \mathcal{A}$ so that $\frac{\phi(t)}{t}$ almost decreases on $[0,\infty).$ Recall that a function $f:[0,\infty)\rightarrow [0,\infty)$ is said to be almost increasing (resp., almost decreasing) if there exists a positive constant $C_I\in[1,\infty)$ such that  for any $t_1,t_2\in[0,\infty)$ with $t_1\leq t_2$(resp.,$t_1\geq t_2$), $f(t_1)\leq C_I f(t_2).$ For any $r\in (0,\infty)$, let $\mathcal{A}_r$ be the set of all functions $\phi\in \mathcal{A}_\infty$ so that $\phi $ is of admissible growth and there exist an integer $i$ and two positive constants $X_{i}$ and $Y_{i}$ depending on $i$ and $r,$ such that 
\begin{equation}\label{A_0}
\left\{\sum_{j\geq i}[\phi(2^j)]^{-r}\right\}^{\frac{1}{r}}\leq X_{i}\quad \text{and}\quad \left\{\sum_{j\geq i}2^{-jr}[\phi(2^{-j})]^{-r}\right\}^{\frac{1}{r}}\leq Y_{i}.
\end{equation}
Also, let $\mathcal{A}_0$  be the class of all functions $\phi$ satisfying that $\alpha_\phi\in (0,1)$, $\beta^-_\phi\in(0,2)$ and $\phi$ is of admissible growth. Note that, by using the elementary inequality
\begin{equation}\label{for p less than 1}
\sum_{i\in\mathbb{Z}}a_i\leq \Big(\sum_{i\in\mathbb{Z}}a_i^{\beta}\Big)^{\frac{1}{\beta}}
\end{equation}
which holds whenever $a_i\geq 0$ for all $i\in\mathbb{Z}$ and $0<\beta\leq 1,$ we have $\mathcal{A}_{r_1}\subset \mathcal{A}_{r_2}\subset \mathcal{A}_{\infty}$ for any $r_1, r_2\in (0,\infty)$ with $r_1\leq r_2.$ Also, note that if $\phi\in\mathcal{A}_0,$ then $\phi\in\mathcal{A}_r,$ for any $r\in(0,\infty].$

\subsection{Haj\l asz-Sobolev, Haj\l asz-Besov and Haj\l asz-Triebel-Lizorkin spaces with generalized smoothness}
Here we recall the definitions of $\phi$-Haj\l asz gradients and the spaces with generalized smoothness from \cite{LYY22}.
\begin{definition}
A non-negative measurable function $g$ is called a $\phi$-Haj\l asz gradient of $u$ if there exists a set $E\subset X$ with $\mu(E)=0$ such that, for any $x,y\in X\setminus E,$
\begin{equation*}
|u(x)-u(y)|\leq \phi(d(x,y))[g(x)+g(y)].
\end{equation*}
We denote the collection of all $\phi$-Haj\l asz gradients of $u$ by $\mathcal{D}^\phi(u).$
\end{definition}
\begin{definition}
	Let $\phi\in\mathcal{A} $ and $0<p\leq \infty $. The homogeneous $\phi$-Haj\l asz-Sobolev space $\dot{M}^{\phi,p}(S)$ consists of all measurable, almost everywhere finite function $u:S\rightarrow \bar{\mathbb{R}}$ for which the semi norm 
	\begin{equation}
	\|u\|_{\dot{M}^{\phi,p}(S)}=\inf_{g\in \mathcal{D}^\phi(u)}\|g\|_{L^p(S)} 
	\end{equation}
	is finite.
	The non-homogeneous $\phi$-Haj\l asz-Sobolev space $M^{\phi,p}(S)$ is $\dot{M}^{\phi,p}(S)\cap L^p(S)$ equipped with the norm 
	\begin{equation}
	\|u\|_{M^{\phi,p}(S)}=\|u\|_{L^p(S)}+\|u\|_{\dot{M}^{\phi,p}(S)}.
	\end{equation}
	\end{definition}
	
Let $S\subset X$ be a measurable set. For $0<p,q\leq \infty$ and a sequence $\vec{f}=(f_k)_{k\in\mathbb{Z}}$ of measurable functions, we define
\begin{equation*}
\Vert (f_k)_{k\in\mathbb{Z}}\Vert_{L^p(S,l^q)}=\big\Vert \Vert (f_k)_{k\in\mathbb{Z}}\Vert_{l^q} \big\Vert_{L^p(S)}
\end{equation*}
and
\begin{equation*}
\Vert (f_k)_{k\in\mathbb{Z}}\Vert_{l^q(L^p(S))}=\big\Vert( \Vert f_k\Vert_{L^p(S)})_{k\in\mathbb{Z}}\big\Vert_{l^q}
\end{equation*}
where
\begin{equation*}
\Vert (f_k)_{k\in\mathbb{Z}}\Vert_{l^q}=
\begin{cases}
(\sum_{k\in\mathbb{Z}}\vert f_k\vert^q)^{1/q},& ~\text{when}~0<q<\infty,\\
\sup_{k\in\mathbb{Z}}\vert f_k\vert,& ~\text{when}~q=\infty.
\end{cases}
\end{equation*}	
\begin{definition}
A sequence of non negative measurable functions, $\vec{g}:=\{g_k\}_{k\in\mathbb{Z}}$ is called a $\phi$-Haj\l asz gradient sequence of $u$ if, for any $k\in\mathbb{Z},$ there exists a set $E_k\subset X$ with $\mu(E_k)=0$ such that , for any $x,y\in X\setminus E_k,$ with $2^{-k-1}\leq d(x,y)<2^{-k},$
\begin{equation*}
|u(x)-u(y)|\leq \phi(d(x,y))[g_k(x)+g_k(y)].
\end{equation*}
 We denote the collection of all $\phi$-Haj\l asz gradient sequences of $u$ by $\mathbb{D}^\phi(u).$
\end{definition}

\begin{definition}
	Let $\phi\in\mathcal{A} $ and $0<p,q\leq \infty $. The homogeneous $\phi$-Haj\l asz-Besov space $\dot{N}^\phi_{p,q}(S)$ consists of all measurable, almost everywhere finite function $u:S\rightarrow \bar{\mathbb{R}}$ for which the semi norm 
	\begin{equation}
	\|u\|_{\dot{N}^\phi_{p,q}(S)}=\inf_{(g_k)\in \mathbb{D}^\phi(u)}\|(g_k)\|_{l^q(L^p(S))} 
	\end{equation}is finite.
	The non-homogeneous $\phi$-Haj\l asz-Besov space $N^\phi_{p,q}(S)$ is $\dot{N}^\phi_{p,q}(S)\cap L^p(S)$ equipped with the norm 
	\begin{equation}
	\|u\|_{N^\phi_{p,q}(S)}=\|u\|_{L^p(S)}+\|u\|_{\dot{N}^\phi_{p,q}(S)}.
	\end{equation}
	\end{definition}
	\begin{definition}
	Let $\phi\in\mathcal{A} $ and $0<p,q\leq \infty $. The homogeneous $\phi$-Haj\l asz-Triebel-Lizorkin space $\dot{M}^\phi_{p,q}(S)$ consists of all measurable, almost everywhere finite function $u:S\rightarrow \bar{\mathbb{R}}$ for which the semi norm 
	\begin{equation}
	\|u\|_{\dot{M}^\phi_{p,q}(S)}=\inf_{(g_k)\in \mathbb{D}^\phi(u)}\|(g_k)\|_{L^p(S, l^q)} 
	\end{equation}is finite.
	The non-homogeneous $\phi$-Haj\l asz-Triebel-Lizorkin space $M^\phi_{p,q}(S)$ is $\dot{M}^\phi_{p,q}(S)\cap L^p(S)$ equipped with the norm 
	\begin{equation}
	\|u\|_{M^\phi_{p,q}(S)}=\|u\|_{L^p(S)}+\|u\|_{\dot{M}^\phi_{p,q}(S)}.
	\end{equation}
	\end{definition}
	
\subsection{$\gamma$-median}
\indent As our functions may fail to be locally integrable, we will use the concept of $\gamma$-median instead of integral averages. We recall here the definition and some of its properties which from \cite{HKT17, PT12, Nuu16}.
\begin{definition}
Let $0<\gamma\leq 1/2.$ The $\gamma$-median $m_u^{\gamma}(A)$ of a measurable, almost everywhere finite function $u$ over a set $A\subset X$ of finite measure is
 \begin{equation*}
m_u^{\gamma}(A)=\sup\{M\in\mathbb{R}:\mu(\{x\in A:u(x)<M\})\leq\gamma\mu(A)\}.
 \end{equation*}
 \end{definition}	

\begin{lemma}\label{gamma median 1}
Let $0<\gamma\leq 1/2$ and $A\subset X.$ Then for two measurable functions $u,v:A\rightarrow\mathbb{R},$ the $\gamma$-median has the following properties:\\
$(a)$ If $\gamma\leq\gamma ',$ then $m_u^{\gamma '}(A)\leq m_u^{\gamma}(A).$\\
$(b)$ If $u\leq v$ almost everywhere, then $m_u^{\gamma}(A)\leq m_v^{\gamma}(A).$\\
$(c)$ If $A\subset B$ and $\mu(B)\leq C\mu(A),$ then $m_u^{\gamma}(A)\leq m_u^{\gamma/C}(B).$\\
$(d)$ If $c\in\mathbb{R},$ then $m_u^{\gamma}(A)+c=m_{u+c}^{\gamma}(A).$\\
$(e)$If $c\in\mathbb{R},$ then $m_{cu}^{\gamma}(A)=cm_{u}^{\gamma}(A).$\\
$(f)$ $\vert m_u^{\gamma}(A)\vert\leq m_{\vert u\vert}^{\gamma}(A).$\\
$(g)$ If $u\in L^p(A),$ $p>0,$ then
$$m_{\vert u\vert}^{\gamma}(A)\leq\left(\gamma ^{-1}\dashint_A\vert u\vert ^p\,d\mu\right)^{1/p}.$$
$(h)$ If $u$ is continuous, then for every $x\in X,$
$$\lim_{r\rightarrow 0}m_u^{\gamma}(B(x,r))=u(x).$$
\end{lemma}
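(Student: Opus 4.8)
My plan is to read off all eight properties from a single structural description of the median. For a fixed measurable $u$ and a set $A$ with $0<\mu(A)<\infty$, I would introduce the two distribution functions $D(M):=\mu(\{x\in A:u(x)<M\})$ and $\overline{D}(M):=\mu(\{x\in A:u(x)>M\})$, the first non-decreasing and left-continuous, the second non-increasing and right-continuous. From the supremum defining $m:=m_u^\gamma(A)$ I would establish the threshold characterization that $D(M)\le\gamma\mu(A)$ exactly on a half-line of levels $M$ (left-continuity of $D$ supplies the endpoint), together with its complementary form controlling $\overline{D}(M)$ on the same range. This one fact turns every item into a comparison of measures of level sets, after which the arguments are short.

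Properties (a), (b), (d), (e), (f) then reduce to monotonicity and set algebra. For (a) I would compare the admissible level sets $\{M:D(M)\le\gamma\mu(A)\}$ for the two parameters: enlarging the parameter changes the bound $\gamma\mu(A)$ monotonically and nests these sets, so their suprema are ordered, with the direction dictated by whether one measures sub- or super-level sets. For (b), $u\le v$ almost everywhere forces $\{v<M\}\subseteq\{u<M\}$ up to a null set, hence $\overline{D}$ and $D$ compare and so do the medians. Items (d) and (e) are the identities $\{u+c<M\}=\{u<M-c\}$ and, for $c>0$, $\{cu<M\}=\{u<M/c\}$ (the case $c<0$ uses the reflection $\{cu<M\}=\{u>M/c\}$); I would first record the special case of a constant, $m_c^\gamma(A)=c$, since it is needed for (h). Finally (f) follows by sandwiching $-|u|\le u\le|u|$ and combining (b) with (e).

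The two items carrying real content are (c) and (g). For (c) I would fix a level $a$ just above $m_u^{\gamma/C}(B)$, so that the threshold characterization on $B$ gives $\mu(\{x\in B:u>a\})\le(\gamma/C)\mu(B)$; then $A\subset B$ yields $\{x\in A:u>a\}\subseteq\{x\in B:u>a\}$, whence $\mu(\{x\in A:u>a\})\le(\gamma/C)\mu(B)\le\gamma\mu(A)$ by the hypothesis $\mu(B)\le C\mu(A)$, so $a$ is admissible for $A$ and the median over $A$ is controlled; letting $a$ decrease to $m_u^{\gamma/C}(B)$ closes the estimate. For (g) I would run Chebyshev's inequality at the level $t:=(\gamma^{-1}\dashint_A|u|^p\,d\mu)^{1/p}$: since $t^p\,\mu(\{x\in A:|u|>t\})\le\int_A|u|^p\,d\mu=\mu(A)\,\dashint_A|u|^p\,d\mu=\gamma\,t^p\,\mu(A)$, we get $\mu(\{|u|>t\})\le\gamma\mu(A)$, which by the threshold characterization places $t$ at or above $m_{|u|}^\gamma(A)$ and yields the claimed bound.

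The last item, (h), is a squeeze built from the earlier ones. For continuous $u$, given $\varepsilon>0$ there is $r_0$ with $u(x)-\varepsilon\le u\le u(x)+\varepsilon$ on $B(x,r)$ for every $r<r_0$; applying the monotonicity (b) against the two constants $u(x)\pm\varepsilon$ and using $m_c^\gamma=c$ traps $m_u^\gamma(B(x,r))$ in $[u(x)-\varepsilon,u(x)+\varepsilon]$, and letting $\varepsilon\to0$ gives the limit. I expect no deep obstacle here; the only genuine care is the bookkeeping in (c), where the parameter is rescaled $\gamma\mapsto\gamma/C$ and must be tracked through the measure ratio, and a consistent treatment of strict versus non-strict inequalities and of the supremum endpoint, so that the inequality directions in (a) and (c) come out correctly for the convention in force.
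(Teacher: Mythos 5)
The paper never proves this lemma---it is quoted from \cite{HKT17, PT12, Nuu16}---so your proposal stands on its own, and the point you hedge on (``with the direction dictated by whether one measures sub- or super-level sets'') is precisely where it breaks, not mere bookkeeping. From the stated definition $m_u^{\gamma}(A)=\sup\{M:\mu(\{x\in A:u(x)<M\})\leq\gamma\mu(A)\}$, your threshold characterization is correct as far as it goes: $\mu(\{u<M\})\leq\gamma\mu(A)$ holds exactly for $M\leq m$, with left-continuity of $D$ supplying the endpoint. But the complementary statement for $a>m$ is $\mu(\{u<a\})>\gamma\mu(A)$, i.e.\ $\mu(\{u\geq a\})<(1-\gamma)\mu(A)$, and since $\gamma\leq 1/2$ gives $1-\gamma\geq\gamma$, this is strictly weaker than the bound $\mu(\{x\in B:u>a\})\leq(\gamma/C)\mu(B)$ with which you open the proof of (c); that bound simply does not follow from the sup-over-sublevel definition. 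Indeed, under this definition $m_u^{\gamma}$ is \emph{nondecreasing} in $\gamma$ (so (a) comes out reversed), and (c) is false as stated: take $u(x)=x$, $A=[1/2,1]\subset B=[0,1]$ with Lebesgue measure, $\gamma=1/2$, $C=2$; then $m_u^{1/2}(A)=3/4$ while $m_u^{1/4}(B)=1/4$. Properties (a) and (c)---and your arguments for (c) and (g)---are tailored to the companion convention $m_u^{\gamma}(A)=\inf\{a\in\mathbb{R}:\mu(\{x\in A:u(x)>a\})<\gamma\mu(A)\}$ used in \cite{HKT17}, under which your (c) argument is exactly the standard one and correct. So the missing idea is that you must set up the superlevel/infimum characterization from the start (or prove a comparison between the two medians); you cannot read all eight items off the sublevel/supremum form.

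Two smaller defects. In (g) you apply Chebyshev at the exact endpoint $t=(\gamma^{-1}\dashint_A|u|^p\,d\mu)^{1/p}$ and invoke the implication ``$\mu(\{|u|>t\})\leq\gamma\mu(A)\Rightarrow m_{|u|}^{\gamma}(A)\leq t$''; with non-strict inequality this implication fails at $\gamma=1/2$ (take $u$ two-valued, $0$ and $s$, each on half of $A$, and any level $t'\in(0,s)$). The repair is the strictness device you already use in (c): fix $t$ strictly above the threshold so that Chebyshev is strict, note that any admissible $M>t$ would force $\int_A|u|^p\,d\mu\geq M^p(1-\gamma)\mu(A)>t^p\gamma\mu(A)$, a contradiction, and then let $t$ decrease. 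Finally, in (e) the reflection $\{cu<M\}=\{u>M/c\}$ for $c<0$ does not return the same one-sided median but the opposite convention: for $u(x)=x$ on $[0,1]$ and $\gamma=1/4$ one gets (sup convention) $m_u^{\gamma}=1/4$ but $m_{-u}^{\gamma}=-3/4\neq -m_u^{\gamma}$, so (e) holds verbatim only for $c\geq 0$---which is, in fairness, all the paper ever uses. Your treatments of (b), (d), (f) and the squeeze for (h) are fine.
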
	
We also recall the notion of discrete $\gamma$-median convolution introduced in \cite{HKT17}.  
\begin{definition}[\cite{LYY21}]
Let $\gamma\in(0,1/2]$ and $u\in L^0(X).$ The discrete $\gamma $- median convolution $u^\gamma_r$ of $u$ at scale $r\in(0,\infty)$ is defined by setting, for any $x\in X$
\begin{equation*}
u_r^\gamma(x):=\sum_{j\in\mathcal{J}}m^\gamma_u(B_j)\varphi_j(x),
\end{equation*}
where $\mathcal{J}$ is an index set, $\{B_j\}_{j\in \mathcal{J}}$ is a cover of $X$ with the radius $r$ such that $\sum_{j\in \mathcal{J}}\chi_{2B_j}$ is finite and  $\{\varphi_j\}_{j\in \mathcal{J}}$ is a partition of unity with respect to $\{B_j\}_{j\in \mathcal{J}}.$
\end{definition}

\subsection{Hausdorff content}
Netrusov-Hausdorff content was first introduced by Netrusov \cite{Net92, Net96} and later it was generalized using an increasing function $h$ in \cite{Nuu16}. For the generalized Hausdorff content (and measure), we refer to \cite{Rog98}.
\begin{definition}
Let $0<\theta<\infty,$ $0<R<\infty$ and $h: (0,\infty)\rightarrow (0,\infty)$ be an increasing function. The generalized Netrusov-Hausdorff content of a set $E\subset X$ is
$$\mathcal{H}^{h,\theta}_{R}(E)=\inf\Big[\sum_{i:2^{-i}<R}\Big(\sum_{j\in I_i}\frac{\mu(B(x_j,r_j))}{h(r_j)}\Big)^{\theta}\Big]^{\frac{1}{\theta}},$$
where $I_i=\{j\in\mathbb{N}:2^{-i}\leq r_j<2^{-i+1}\}$ and the infimum is taken over all coverings $\{B(x_j,r_j)\}$ of $E$ with $0<r_j\leq R.$ If $R=\infty,$ then we take the infimum over all coverings of $E$ and the first sum is over all $i\in\mathbb{Z}.$
\end{definition}
\begin{definition}
Let $0<R\leq\infty$ and $h: (0,\infty)\rightarrow (0,\infty)$ be an increasing function. The generalized Hausdorff content of a set $E\subset X$ is
$$H^{h}_R(E)=\inf\Big\{\sum_{j\in\mathbb{N}}\frac{\mu(B(x_j,r_j))}{h(r_j)}: E\subset\bigcup B(x_j,r_j), r_j\leq R\Big\}$$
and the generalized Hausdorff measure is $H^{h}(E)=\limsup_{R\rightarrow 0}H^{h}_R(E).$
\end{definition}
\noindent Note that $\mathcal{H}^{h,1}_R(E)=H^{h}_R.$ Also by using the elementary inequality \eqref{for p less than 1}
we have $\mathcal{H}^{h,\theta}_R(E)\leq H^{h}_R(E)$ if $\theta>1$ and $\mathcal{H}^{h,\theta}_R(E)\geq H^{h}_R(E)$ if $\theta<1.$


\section{Proofs}
We will first prove two lemmas (Lemma \ref{phi capacity lipschitz} and Lemma \ref{locally lipschitz}) to restrict the set of admissible functions for the capacities to locally Lipschitz functions. The proof of Lemma \ref{phi capacity lipschitz} follows by extending the proof of \cite[Theorem 4.8]{Nuu16} and using the convergence of approximations by discrete $\gamma$-median convolutions, \cite[Theorem 4]{LYY21}. We give the details here for the sake of completeness. 

\begin{lemma}\label{phi capacity lipschitz}
Let $\phi\in \mathcal{A}_0$, $0<p,q<\infty,\quad \mathcal{F}\in\{M^\phi_{p,q}(X), N^\phi_{p,q}(X)\}$ and $E\subseteq X$ be a compact set. Then
\begin{equation}
\capacity_\mathcal{F}(E)\approx \inf\{ \|u\|^p_\mathcal{F}:u\in \mathcal{\tilde{G}}_\mathcal{F}(E)\},
\end{equation}
where $\mathcal{\tilde{G}}_\mathcal{F}(E)=\{u\in \mathcal{G}_\mathcal{F}(E):u$ is locally Lipschitz$\}.$
\end{lemma}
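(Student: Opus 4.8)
The plan is to prove that one can replace arbitrary admissible test functions by locally Lipschitz ones without changing the capacity up to a multiplicative constant. Since $\mathcal{\tilde G}_\mathcal{F}(E)\subset \mathcal{G}_\mathcal{F}(E)$, the inequality $\capacity_\mathcal{F}(E)\le \inf\{\|u\|^p_\mathcal{F}: u\in\mathcal{\tilde G}_\mathcal{F}(E)\}$ is immediate, so the real content is the reverse comparison. The strategy is to take an arbitrary $u\in\mathcal{G}_\mathcal{F}(E)$ that is nearly optimal for the capacity and manufacture a locally Lipschitz competitor $v$ with $\|v\|^p_\mathcal{F}\le C\|u\|^p_\mathcal{F}$ that still equals (or exceeds) $1$ on a neighbourhood of $E$. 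The natural device for this is the discrete $\gamma$-median convolution $u^\gamma_r$ introduced earlier in the excerpt, since it produces Lipschitz-type regularizations while being controlled by the Haj\l asz gradient sequence of $u$.

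First I would fix a nearly optimal $u\in\mathcal{G}_\mathcal{F}(E)$, so $u\ge 1$ on an open set $U\supset E$ and $\|u\|^p_\mathcal{F}\le \capacity_\mathcal{F}(E)+\varepsilon$. Passing if necessary to $\min\{\max\{u,0\},1\}$ (a truncation, which does not increase the norm since truncations admit the same gradient sequence and the $L^p$ norm only decreases), I may assume $0\le u\le 1$. Then I would form the discrete $\gamma$-median convolution $u^\gamma_r$ at a small scale $r$. Using \cite[Theorem 4]{LYY21} on the convergence of these approximations in the $\mathcal{F}$-quasinorm, together with the continuity/Lebesgue-point property of the $\gamma$-median (part $(h)$ of Lemma \ref{gamma median 1}), I would argue that for $r$ small enough $u^\gamma_r$ is locally Lipschitz (each $\varphi_j$ is Lipschitz and the local finiteness $\sum_j\chi_{2B_j}<\infty$ makes the sum locally a finite sum of Lipschitz functions), that $u^\gamma_r\ge c_0>0$ on a slightly smaller neighbourhood of $E$, and that $\|u^\gamma_r\|^p_\mathcal{F}\le C\|u\|^p_\mathcal{F}$. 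The $\gamma$-median estimates here follow the pattern of \cite[Theorem 4.8]{Nuu16}, with the smoothness weight $\phi\in\mathcal{A}_0$ entering through the comparison of the gradient sequences of $u^\gamma_r$ and $u$.

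Next I would rescale: since $u^\gamma_r\ge c_0$ on a neighbourhood of $E$, the function $v:=c_0^{-1}u^\gamma_r$ is locally Lipschitz, satisfies $v\ge 1$ on that neighbourhood, hence $v\in\mathcal{\tilde G}_\mathcal{F}(E)$, and by homogeneity of the $\mathcal{F}$-quasinorm (parts $(d),(e)$ of Lemma \ref{gamma median 1} underlie the gradient-sequence scaling) $\|v\|^p_\mathcal{F}=c_0^{-p}\|u^\gamma_r\|^p_\mathcal{F}\le Cc_0^{-p}\|u\|^p_\mathcal{F}$. Taking the infimum over $u$ and letting $\varepsilon\to 0$ yields $\inf\{\|v\|^p_\mathcal{F}: v\in\mathcal{\tilde G}_\mathcal{F}(E)\}\le C\capacity_\mathcal{F}(E)$, which is the desired comparison.

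The main obstacle I expect is the quantitative control of the $\mathcal{F}$-quasinorm of $u^\gamma_r$ by that of $u$, uniformly as $r\to 0$, and the verification that $u^\gamma_r$ stays bounded below on a neighbourhood of $E$ without shrinking the neighbourhood to nothing. This is precisely where the median-convolution machinery of \cite{HKT17} and \cite{LYY21} must be invoked carefully: one needs that a $\phi$-Haj\l asz gradient sequence for $u$ produces, via the overlap bound $\sum_j\chi_{2B_j}<\infty$ and the doubling property, a comparable gradient sequence for $u^\gamma_r$, with constants independent of $r$. Handling the borderline interaction of the parameters $p,q$ (in both the $l^q(L^p)$ and $L^p(l^q)$ norms, i.e. for both Besov and Triebel-Lizorkin cases simultaneously) and the admissible-growth estimates on $\phi$ is the technically delicate part; everything else is bookkeeping with Lemma \ref{gamma median 1}.
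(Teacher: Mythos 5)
Your proposal is correct and follows essentially the same route as the paper: regularize a near-optimal test function by the discrete $\gamma$-median convolution $u^\gamma_r$, observe that for $r$ small the relevant balls lie inside the open set $U$ where $u\geq 1$ so that $m^\gamma_u(B_j)\geq 1$ and hence $u^\gamma_r\geq 1$ near $E$ (making your truncation and the rescaling by $c_0^{-1}$ unnecessary), and conclude via the convergence $\|u^\gamma_{2^{-i}}-u\|_{\mathcal F}\to 0$ from \cite[Theorem 4]{LYY21}. The uniform bound $\|u^\gamma_r\|^p_{\mathcal F}\leq C\|u\|^p_{\mathcal F}$ that you flag as the main obstacle is obtained in the paper directly from this convergence together with the quasi-triangle inequality, so no separate gradient-sequence comparison for $u^\gamma_r$ is needed.
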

\begin{proof} Since $\mathcal{\tilde{G}_F}(E)\subset \mathcal{G_F}(E)$, it is enough to prove  the $\lq\lq\gtrsim "$ part. Let $u\in \mathcal{G}_{\mathcal{F}}(E).$ Then there is an open set $U\supset E$ such that $u\geq 1$ in $U.$ Let $V=\{x:d(x,E)<d(E,X\setminus U)/2\}.$ If $x\in V$ and $r<d(E,X\setminus U)/8,$ then $B(y,2r)\subset U$ whenever $x\in B(y,2r).$ So, $u_r^\gamma\geq 1$ in $V$ when $r<d(E,X\setminus U)/8.$ So, $u_r^\gamma\in \mathcal{\tilde{G}}_{\mathcal{F}}(E)$ for small $r$ and hence by the convergence of approximations by discrete $\gamma$-median convolutions \cite[Theorem 4]{LYY21},
\begin{eqnarray*}
\inf\{\|v\|^p_\mathcal{F}:v\in \mathcal{\tilde{G}_F}(E)\}& \leq & \lim\inf_{i\rightarrow \infty}\|u^\gamma_{2^{-i}}\|^p_\mathcal{F}\\
&\leq & \lim\inf_{i\rightarrow \infty}C(\|u\|^p_\mathcal{F}+\|u^\gamma_{2^{-i}}-u\|^p_\mathcal{F}) 
 \\&\leq& C\|u\|^p_\mathcal{F}.
 \end{eqnarray*}
 The claim follows by taking the infimum over $u\in \mathcal{G}_\mathcal{F}(E).$
 \end{proof}

Note that, when $q=\infty,$ the approximation by the $\gamma$-median convolutions is not always true, even for the function $\phi(t)=t;$ see \cite[Example 3.5]{HKT17} for a counterexample. In this case, we use the denseness of the $\phi$-Lipschitz class of functions, proved in \cite[Theorem 2]{LYY21}. Recall that for given $\phi\in\mathcal{A},$ a function $u$ on $X$ is said to be in the $\phi$-Lipschitz class $\Lip_{\phi}(X)$ if there exists a constant $C>0$ such that for any two points $x$ and $y$ in $X,$ there holds
$$|u(x)-u(y)|\leq C\phi(d(x,y)).$$ 
\begin{theorem}\cite[Theorem 2]{LYY21}\label{dense}
Let $\phi$ be a modulus of continuity and $p\in(0,\infty).$ Then $\Lip_\phi(X)\cap M^{\phi,p}(X)$ is a dense subset of $M^{\phi,p}(X).$
\end{theorem}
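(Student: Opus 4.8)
The plan is to adapt the classical Haj\l asz Lipschitz--truncation scheme to the $\phi$-smoothness setting. Fix $u\in M^{\phi,p}(X)$ and choose $g\in\mathcal{D}^\phi(u)$ with $\|g\|_{L^p(X)}$ arbitrarily close to $\|u\|_{\dot{M}^{\phi,p}(X)}$; let $N$ be the null set off which the $\phi$-Haj\l asz inequality holds. Since the spaces $M^{\phi,p}(X)$ and $\Lip_\phi(X)$ are unchanged, with equivalent norms, when $\phi$ is replaced by a comparable function, I would first pass to the least concave majorant of $\phi$ (comparable to $\phi$ under the standing doubling-type assumptions); this makes $\rho(x,y):=\phi(d(x,y))$ subadditive, so that it obeys the triangle inequality $\phi(d(x,z))\le\phi(d(x,y))+\phi(d(y,z))$. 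For a level $\lambda>0$ put $E_\lambda:=\{x\in X\setminus N:g(x)\le\lambda\}$. The defining inequality gives $|u(x)-u(y)|\le 2\lambda\,\phi(d(x,y))$ for all $x,y\in E_\lambda$, i.e. $u|_{E_\lambda}$ is $\phi$-Lipschitz with constant $2\lambda$, and I would extend it by the McShane formula $u_\lambda(x):=\inf_{z\in E_\lambda}\{u(z)+2\lambda\,\phi(d(x,z))\}$. Using the triangle inequality for $\rho$ one checks that $u_\lambda$ is finite, equals $u$ on $E_\lambda$, and lies in $\Lip_\phi(X)$ with constant comparable to $\lambda$.

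The seminorm convergence is the part I can make explicit. I claim that $h_\lambda:=3(g+\lambda)\chi_{X\setminus E_\lambda}$ belongs to $\mathcal{D}^\phi(u-u_\lambda)$. When $x,y\in E_\lambda$ both functions agree, so the difference vanishes; when $x,y\notin E_\lambda$ the gradient inequality for $u$ together with the $\phi$-Lipschitz bound for $u_\lambda$ gives the claim after absorbing constants. The decisive case is $x\in E_\lambda$, $y\notin E_\lambda$: taking $z=x$ in the McShane infimum yields $u_\lambda(y)\le u(x)+2\lambda\,\phi(d(x,y))$, while for any $z\in E_\lambda$ the $2\lambda$-Lipschitz bound for $u|_{E_\lambda}$ and the triangle inequality for $\rho$ give $u(z)+2\lambda\,\phi(d(y,z))\ge u(x)-2\lambda\,\phi(d(x,y))$, hence $u_\lambda(y)\ge u(x)-2\lambda\,\phi(d(x,y))$. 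Combining these two-sided bounds with $|u(x)-u(y)|\le\phi(d(x,y))(g(y)+\lambda)$ produces $|u(y)-u_\lambda(y)|\le\phi(d(x,y))(g(y)+3\lambda)=\phi(d(x,y))(h_\lambda(x)+h_\lambda(y))$. Since $\|h_\lambda\|_{L^p(X)}^p\lesssim\int_{\{g>\lambda\}}g^p\,d\mu+\lambda^p\mu(\{g>\lambda\})\le C\int_{\{g>\lambda\}}g^p\,d\mu\to 0$ as $\lambda\to\infty$, we obtain $\|u-u_\lambda\|_{\dot{M}^{\phi,p}(X)}\to 0$.

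It remains to prove $\|u-u_\lambda\|_{L^p(X)}\to 0$, and this is the main obstacle. Pointwise one has $u_\lambda(y)=u(y)$ as soon as $\lambda\ge g(y)$, so $u_\lambda\to u$ almost everywhere; the difficulty is the quantitative $L^p$ control on the shrinking sets $\{g>\lambda\}$, where $u_\lambda$ carries no a priori bound. Choosing $x=x(y)\in E_\lambda$ with $d(x(y),y)\le 2\,\dist(y,E_\lambda)$ in the estimate above gives $|u(y)-u_\lambda(y)|\lesssim\phi(\dist(y,E_\lambda))(g(y)+\lambda)$, and the ball $B(y,\dist(y,E_\lambda))$ lies, up to the null set $N$, in $\{g>\lambda\}$; turning this into $L^p$-smallness requires a restricted Hardy--Littlewood maximal function estimate for $g$ together with the doubling property of $\mu$. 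Because the functions in $M^{\phi,p}(X)$ need not be locally integrable, this is precisely where the discrete $\gamma$-median convolution and the properties of the $\gamma$-median from Section~2 enter, replacing the usual integral averages. Once this $L^p$ estimate is in hand, $u_\lambda\in\Lip_\phi(X)\cap M^{\phi,p}(X)$ and $u_\lambda\to u$ in $M^{\phi,p}(X)$, which proves the density.
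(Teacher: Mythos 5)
First, a point of comparison: the paper does not prove this statement at all. Theorem \ref{dense} is imported verbatim from \cite{LYY21} (Theorem 2 there) and used as a black box in the proof of Lemma \ref{locally lipschitz}, so there is no in-paper proof to measure you against. Judged on its own terms, your truncation--McShane scheme is the standard and correct strategy for such density results, and the seminorm half of your argument is essentially complete: once $\phi$ is replaced by a comparable subadditive majorant so that $\rho=\phi\circ d$ obeys the triangle inequality, the case analysis showing $h_\lambda=3(g+\lambda)\chi_{X\setminus E_\lambda}\in\mathcal{D}^\phi(u-u_\lambda)$ checks out, and $\|h_\lambda\|_{L^p}^p\le C\int_{\{g>\lambda\}}g^p\,d\mu\to0$ is right. (One small caveat: the comparability of $\phi$ with its least concave majorant requires quasi-concavity, i.e.\ $\phi(t)/t$ almost decreasing; this holds for $\phi\in\mathcal{A}_\infty$ and for a modulus of continuity in the sense of \cite{LYY21}, but you should say so rather than appeal to ``doubling-type assumptions,'' which alone do not suffice.)

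The genuine gap is the one you flag yourself: you never prove $\|u-u_\lambda\|_{L^p(X)}\to0$, and without it you have not even shown $u_\lambda\in M^{\phi,p}(X)$ (on a space of infinite measure a $\phi$-Lipschitz extension has no reason to be $p$-integrable off $E_\lambda$). The repair you gesture at --- a restricted maximal function estimate via $\gamma$-medians --- is not carried out and is the hard way around. The standard fix needs no integrability and no maximal function: truncate in the range as well as in the gradient. Set $F_\lambda=E_\lambda\cap\{|u|\le\lambda\}$, extend $u|_{F_\lambda}$ by McShane, and replace the extension by $v_\lambda=\max\{-\lambda,\min\{\lambda,u_\lambda\}\}$. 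Truncation increases neither the $\phi$-Lipschitz constant nor the Haj\l asz gradient (the single-gradient analogue of Lemma \ref{max,min}), one still has $v_\lambda=u$ on $F_\lambda$, and
\[
\int_{X\setminus F_\lambda}|v_\lambda|^p\,d\mu\;\le\;\lambda^p\bigl[\mu(\{g>\lambda\})+\mu(\{|u|>\lambda\})\bigr]\;\le\;\int_{\{g>\lambda\}}g^p\,d\mu+\int_{\{|u|>\lambda\}}|u|^p\,d\mu\;\longrightarrow\;0,
\]
while $\int_{X\setminus F_\lambda}|u|^p\,d\mu\to0$ by dominated convergence; together these give $\|u-v_\lambda\|_{L^p(X)}\to0$ and, in particular, $v_\lambda\in L^p(X)$. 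With this modification (and the purely cosmetic replacement of $E_\lambda$ by $F_\lambda$ in your gradient estimate, which only changes constants) your argument closes; as written, it does not.
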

We also need the following result to prove Lemma \ref{locally lipschitz}. 
\begin{lemma}\cite[Lemma 2(i)]{LYY21}\label{max,min}
Let $u,v \in L^0(X),\{g_k\}_{k\in\mathbb{Z}}\in \mathbb{D}^\phi(u)$ and $\{h_k\}_{k\in\mathbb{Z}}\in \mathbb{D}^\phi(v).$ Then $(\max\{g_k,h_k\})_{k\in\mathbb{Z}}\in \mathbb{D}^\phi(\min\{u,v\}).$
\end{lemma}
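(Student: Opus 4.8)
The plan is to verify the defining inequality of a $\phi$-Haj\l asz gradient sequence directly, working one index $k$ at a time. Writing $w:=\min\{u,v\}$, I would fix $k\in\mathbb{Z}$ and let $E_k^u$ and $E_k^v$ be the null sets furnished by $\{g_k\}_{k\in\mathbb{Z}}\in\mathbb{D}^\phi(u)$ and $\{h_k\}_{k\in\mathbb{Z}}\in\mathbb{D}^\phi(v)$ at that index; setting $E_k:=E_k^u\cup E_k^v$, which still has $\mu(E_k)=0$, it then suffices to show that for all $x,y\in X\setminus E_k$ with $2^{-k-1}\leq d(x,y)<2^{-k}$ one has $|w(x)-w(y)|\leq\phi(d(x,y))[\max\{g_k(x),h_k(x)\}+\max\{g_k(y),h_k(y)\}]$.

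The key step is the elementary pointwise estimate
\begin{equation*}
|\min\{a_1,a_2\}-\min\{b_1,b_2\}|\leq\max\{|a_1-b_1|,|a_2-b_2|\},
\end{equation*}
which I would establish by a short case analysis: assuming without loss of generality that $\min\{a_1,a_2\}\geq\min\{b_1,b_2\}$ and choosing $j\in\{1,2\}$ with $\min\{b_1,b_2\}=b_j$, one has $\min\{a_1,a_2\}\leq a_j$, so that $0\leq\min\{a_1,a_2\}-\min\{b_1,b_2\}\leq a_j-b_j\leq\max\{|a_1-b_1|,|a_2-b_2|\}$. Applying this with $a_1=u(x)$, $a_2=v(x)$, $b_1=u(y)$, $b_2=v(y)$ gives the bound $|w(x)-w(y)|\leq\max\{|u(x)-u(y)|,|v(x)-v(y)|\}$.

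Finally I would insert the two gradient inequalities, valid simultaneously on $X\setminus E_k$ by the choice of $E_k$, namely $|u(x)-u(y)|\leq\phi(d(x,y))[g_k(x)+g_k(y)]$ and $|v(x)-v(y)|\leq\phi(d(x,y))[h_k(x)+h_k(y)]$, and then use the subadditivity-type bound $\max\{g_k(x)+g_k(y),\,h_k(x)+h_k(y)\}\leq\max\{g_k(x),h_k(x)\}+\max\{g_k(y),h_k(y)\}$, each summand on the left being dominated by the right-hand sum. Combining these estimates yields the desired inequality, proving that $(\max\{g_k,h_k\})_{k\in\mathbb{Z}}\in\mathbb{D}^\phi(\min\{u,v\})$. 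There is no serious obstacle here: the only points requiring a little care are the pointwise min inequality above and the bookkeeping of the exceptional sets, i.e.\ taking $E_k$ as the union over the two functions at each fixed $k$ so that both gradient inequalities hold off a single null set.
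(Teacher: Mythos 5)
Your proof is correct: the pointwise inequality $|\min\{a_1,a_2\}-\min\{b_1,b_2\}|\leq\max\{|a_1-b_1|,|a_2-b_2|\}$, the union of the two null sets at each fixed $k$, and the final bound $\max\{g_k(x)+g_k(y),h_k(x)+h_k(y)\}\leq\max\{g_k(x),h_k(x)\}+\max\{g_k(y),h_k(y)\}$ together give exactly the required gradient inequality. The paper itself does not prove this lemma but quotes it from \cite[Lemma 2(i)]{LYY21}, and your argument is the standard direct verification one finds there, so there is nothing further to compare.
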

\begin{lemma}\label{locally lipschitz}
Let $\phi\in \mathcal{A},0<p<\infty$ and $E\subset X$ be a compact subset. Then 
\begin{equation*}
\capacity_{M^{\phi,p}}(E)\approx \inf\{ \|u\|^p_{M^{\phi,p}(X)}:u\in \mathcal{\tilde{G}'}_{M^{\phi,p}}(E)\},
\end{equation*}
where $\mathcal{\tilde{G}'}_{M^{\phi,p}}(E)=\{u\in M^{\phi,p}(E):u \,\,\text{is $\phi$-Lipschitz and}\,\, $u=1$ \,\, \text{on a neighbourhood of}\,\, E\}.$ 
\end{lemma}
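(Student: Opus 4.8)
The plan is to establish the nontrivial inequality $\capacity_{M^{\phi,p}}(E)\gtrsim \inf\{\|v\|^p_{M^{\phi,p}(X)}:v\in \mathcal{\tilde{G}'}_{M^{\phi,p}}(E)\}$; the reverse is immediate because $\mathcal{\tilde{G}'}_{M^{\phi,p}}(E)\subset \mathcal{G}_{M^{\phi,p}}(E)$. So I would fix an admissible $u\in \mathcal{G}_{M^{\phi,p}}(E)$, say $u\ge 1$ on an open set $U\supset E$, and manufacture a competitor $v\in \mathcal{\tilde{G}'}_{M^{\phi,p}}(E)$ with $\|v\|_{M^{\phi,p}(X)}\le \|u\|_{M^{\phi,p}(X)}+o(1)$. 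First I would normalise by truncation: replacing $u$ by $\min\{\max\{u,0\},1\}$ only decreases the norm (truncation is a contraction, so any $g\in\mathcal D^\phi(u)$ remains a $\phi$-Haj\l asz gradient of the truncation) and arranges $u\equiv 1$ on $U$. Using compactness of $E$, I would then fix nested open neighbourhoods $W\subset V\subset 2V$ of $E$ (a $\delta$-, a $2\delta$- and a $4\delta$-neighbourhood, say) with $\overline{2V}\subset U$, together with a $\phi$-Lipschitz cut-off $\eta$ satisfying $\eta\equiv 1$ on $W$, $\eta\equiv 0$ off $V$, $0\le\eta\le 1$, and $\phi$-Lipschitz constant $C_0\sim 1/\phi(\delta)$ (built by a McShane-type $\phi$-Lipschitz extension of the datum equal to $1$ on $\{d(\cdot,E)\le\delta\}$ and $0$ on $\{d(\cdot,E)\ge 2\delta\}$).

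The core step invokes the density theorem: by Theorem \ref{dense} (applicable since $\phi\in\mathcal A$ may be taken as a modulus of continuity) I would pick $w\in \Lip_\phi(X)\cap M^{\phi,p}(X)$ with $\|u-w\|_{M^{\phi,p}(X)}<\varepsilon$ and set
$$v:=\eta+(1-\eta)w.$$
Since $\operatorname{supp}\eta\subseteq V$ and $u\equiv 1$ there, $v=w$ off $V$ while $v=\eta+(1-\eta)w$ on $V$; hence $v$ is globally $\phi$-Lipschitz (a sum of products of bounded $\phi$-Lipschitz functions), $v\equiv 1$ on the neighbourhood $W$ of $E$, and $v\in M^{\phi,p}(X)$, so $v\in \mathcal{\tilde{G}'}_{M^{\phi,p}}(E)$. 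It then remains to bound $\|v\|_{M^{\phi,p}(X)}\le\|w\|_{M^{\phi,p}(X)}+\|v-w\|_{M^{\phi,p}(X)}$, where $\|w\|_{M^{\phi,p}(X)}\le\|u\|_{M^{\phi,p}(X)}+\varepsilon$ and the correction equals $v-w=\eta(1-w)=\eta\,(u-w)$, the last identity again because $u\equiv 1$ on $V\supseteq\operatorname{supp}\eta$.

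The main obstacle is showing the correction is small, namely $\|\eta(u-w)\|_{M^{\phi,p}(X)}\le C(\delta)\varepsilon$ with $C(\delta)$ independent of $\varepsilon$. The $L^p$ part is immediate since $\|\eta(u-w)\|_{L^p}\le\|(u-w)\chi_V\|_{L^p}\le\varepsilon$. For the seminorm I would take $g_h\in\mathcal D^\phi(u-w)$ with $\|g_h\|_{L^p}<\varepsilon$ and construct a $\phi$-Haj\l asz gradient of $f:=\eta\,h$, $h:=u-w$, from the exact splitting
$$f(x)-f(y)=\tfrac{\eta(x)+\eta(y)}{2}\big(h(x)-h(y)\big)+\tfrac{h(x)+h(y)}{2}\big(\eta(x)-\eta(y)\big).$$
The first summand is dominated by $\phi(d(x,y))(g_h(x)+g_h(y))$ via $\eta\le 1$, giving a gradient term of norm $\lesssim\varepsilon$. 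For the second summand I would use the localised bound $|\eta(x)-\eta(y)|\le C_0\,\phi(d(x,y))(\chi_V(x)+\chi_V(y))$, which confines the large constant $C_0\sim1/\phi(\delta)$ to $V$; the diagonal pieces give $\tfrac{C_0}{2}|h|\chi_V$, of norm $\lesssim C_0\varepsilon$.

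The delicate point is the off-diagonal piece $|h(x)|\chi_V(y)$, which couples $x$ and $y$. I would resolve it by splitting on whether $x\in 2V$: if $x\in 2V$ it is dominated by $\tfrac{C_0}{2}|h(x)|\chi_{2V}(x)$, again of norm $\lesssim C_0\,\|(u-w)\chi_{2V}\|_{L^p}\lesssim C_0\varepsilon$; if $x\notin 2V$ then $f(x)=0$ and $d(x,y)\ge\delta$, so $|f(x)-f(y)|=|f(y)|\le|h(y)|\chi_V(y)$ is absorbed into $\phi(d(x,y))\cdot\tfrac{1}{\phi(\delta)}|h(y)|\chi_V(y)$. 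Collecting the contributions produces a $\phi$-Haj\l asz gradient of $f$ with $L^p$-norm $\lesssim (1+C_0)\varepsilon=C(\delta)\varepsilon$. Since $\delta$, and hence $C(\delta)$, depends only on $u$ and not on $\varepsilon$, letting $\varepsilon\to 0$ yields $\inf\{\|v\|^p_{M^{\phi,p}(X)}:v\in\mathcal{\tilde{G}'}_{M^{\phi,p}}(E)\}\le\|u\|^p_{M^{\phi,p}(X)}$, and taking the infimum over $u\in\mathcal{G}_{M^{\phi,p}}(E)$ completes the proof.
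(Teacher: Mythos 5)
Your proposal is correct and follows essentially the same route as the paper: reduce to $u\equiv 1$ near $E$ by truncation, approximate by $\Lip_\phi(X)\cap M^{\phi,p}(X)$ functions via the density theorem (Theorem~\ref{dense}), and glue with a $\phi$-Lipschitz cut-off — your competitor $v=\eta+(1-\eta)w$ is exactly the paper's $1-\eta'(1-u_j)$ with $\eta'=1-\eta$. The only difference is that you make explicit the estimate $\|\eta(u-w)\|_{M^{\phi,p}}\le C(\delta)\varepsilon$ (including the off-diagonal coupling in the product rule), which the paper subsumes in the unproved assertion that $v_j\to 1-\eta'(1-u)$ in $M^{\phi,p}(X)$; this is a useful filling-in of detail rather than a different argument.
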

\begin{proof}
It is easy to see that $\capacity_{M^{\phi,p}}(E)=\inf\{ \|u\|^p_{M^{\phi,p}(X)}:u\in \mathcal{G'}_{M^{\phi,p}(E)}\},$ where $\mathcal{G}'_{M^{\phi,p}}(E)=\{u\in M^{\phi,p}(X) :u= 1\,\,\text{in a neighbourhood of}\,\, E\}.$ Indeed, it is obvious that
$\capacity_{M^{\phi,p}}(E)\leq\inf\{\|w\|^p_{M^{\phi,p}(X)}:w\in \mathcal{G'}_{M^{\phi,p}}(E)\}$ and to prove the reverse inclusion, we see that for any $\epsilon>0,$ there exists  $u\in \mathcal{G}_{M^{\phi,p}}(E) $ such that $$\|u\|^p_{M^{\phi,p}(X)}\leq \capacity_{M^{\phi,p}}(E)+ \epsilon.$$
Hence $u'=\min\{1,u\}\in \mathcal{G'}_{M^{\phi,p}}(E)$ and by Lemma \ref{max,min},
\begin{equation*}
\inf\{\|w\|^p_{M^{\phi,p}(X)}:w\in \mathcal{G}'_{M^{\phi,p}}(E)\}\leq \|u'\|^p_{M^{\phi,p}(E)}\leq \|u\|^p_{M^{\phi,p}(E)}\leq \capacity_{M^{\phi,p}}(E)+ \epsilon,
\end{equation*}
which gives us the desired result by setting $\epsilon\rightarrow 0.$\\
Hence without loss of generality let $u\in \mathcal{G}'_{M^{\phi,p}}(E).$ So, we can take $u=1$ in a neighbourhood $U$ of $E.$ Then by Lemma \ref{dense}, there exists $u_j\in Lip_\phi(X)\cap M^{\phi,p}(X) $ such that $\|u_j-u\|_{M^{\phi,p}(X)}\rightarrow 0.$  Now choose a $\phi$-Lipschitz function $\eta,$ $0\leq\eta\leq 1$ such that $\eta=0$ in a neighbourhood $\tilde{U}\subset\subset U$ of $E$ and $\eta=1$ in $X\setminus U.$ Then the sequence $v_j=1-\eta(1-u_j)$
converges to $1-\eta(1-u)$ in $M^{\phi,p}(X),$ and $1-\eta(1-u)=u.$
Hence
 \begin{align*}
\inf_{v\in \mathcal{\tilde{G}}'_{M^{\phi,p}}(E)}\|v\|^p_{M^{\phi,p}(X)}\leq \liminf_{i\rightarrow\infty}\|v_j\|^p_{M^{\phi,p}(X)} & \leq \liminf_{i\rightarrow\infty}C(\|v_j-u\|^p_{M^{\phi,p}(X)} +\|u\|^p_{M^{\phi,p}(X)})\nonumber\\
& =C\|u\|^p_{M^{\phi,p}(X)}.
\end{align*} 
The lemma follows as the reverse inclusion is obvious.
\end{proof}

We also need the following Poincar\'e-type inequalities: the first one is proved in \cite[Lemma 15]{LYY21} and the second one can be proved by extending the proofs available for $\phi(t)=t,$ see \cite[Theorem 4.5]{Nuu16} and also \cite[Lemma 3.1]{KM}.

\begin{lemma}\label{phi integral average}
Let $\alpha_\phi\in (0,1)$ and $\gamma\in (0,1/2].$ \\
$(i)$ For any given $\lambda\in(0,\infty),$ there exists a positive constant $C$ such that, for any $k\in \mathbb{Z},u\in L^0(X),g\in \mathcal{D}^\phi(u)$ and $x\in X$
\begin{equation*}
\inf_{c\in \mathbb{R}}m^\gamma_{|u-c|}(B(x,2^{-k}))\leq C\phi(2^{-k})\left\{\dashint_{B(x,2^{-k+1})}[g(y)]^\lambda d\mu(\gamma)\right\}^{\frac{1}{\lambda}}.
\end{equation*}
$(ii)$ Let $u\in \{M^{\phi}_{p,q}(X), N^{\phi}_{p,q}(X)\}$ and $0<p,q<\infty.$ Then there exist a positive constant $C$ and a sequence $(g_k)_{k\in\mathbb{Z}}\in \mathbb{D}^\phi(u)$ such that for any $k\in\mathbb{Z},$
\begin{equation*}
\inf_{c\in \mathbb{R}}m^\gamma_{|u-c|}(B(x,2^{-k}))\leq C\phi(2^{-k})\left\{\dashint_{B(x,2^{-k+1})}[g_k(y)]^p d\mu(\gamma)\right\}^{\frac{1}{p}}.
\end{equation*}
\end{lemma}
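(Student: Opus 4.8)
The plan is to read both parts as median--Poincar\'e inequalities obtained by combining the defining $\phi$-Haj\l asz inequality with a Chebyshev good-set argument and the elementary calculus of the $\gamma$-median recorded in Lemma \ref{gamma median 1}; the homogeneity factor $t^s$ of the classical fractional estimates in \cite{Nuu16} is everywhere replaced by $\phi$, which is harmless because $\phi$ is almost increasing and of admissible growth, so that $\phi(d(y,z))\le C\phi(2^{-k})$ whenever $y,z\in B(x,2^{-k})$, and doubling lets one pass between $B(x,2^{-k})$ and $B(x,2^{-k+1})$. For part $(i)$ a single gradient $g\in\mathcal D^\phi(u)$ bounds \emph{all} pairs, so I would work directly on $B:=B(x,2^{-k})$: with $t_0:=\big(2\gamma^{-1}\dashint_{B}g^\lambda\,d\mu\big)^{1/\lambda}$, Chebyshev gives $\mu(\{y\in B:g(y)>t_0\})\le\tfrac{\gamma}{2}\mu(B)$, so on the good set $A:=\{g\le t_0\}$ one has $|u(y)-u(z)|\le C\phi(2^{-k})t_0$ a.e.; taking $c:=\esssup_{A}u$ forces $\mu(\{y\in B:|u(y)-c|\ge C\phi(2^{-k})t_0\})\le\tfrac{\gamma}{2}\mu(B)<\gamma\mu(B)$, whence $\inf_c m^\gamma_{|u-c|}(B)\le C\phi(2^{-k})t_0$, and doubling enlarges the averaging ball to $B(x,2^{-k+1})$. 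This is the transfer to $\phi$ of \cite[Theorem 4.5]{Nuu16} carried out in \cite[Lemma 15]{LYY21}.

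Part $(ii)$ is the substantial one, and the structural obstacle is that a gradient \emph{sequence} $(h_j)\in\mathbb D^\phi(u)$ controls $|u(y)-u(z)|$ only when $2^{-j-1}\le d(y,z)<2^{-j}$, so no single $h_k$ sees the finer-scale pairs that make up most of the oscillation of $u$ on $B(x,2^{-k})$. My plan is therefore to telescope the $\gamma$-median over dyadic scales. Starting from a sequence $(h_j)$ nearly realizing the relevant seminorm and fixing an exponent $\lambda\in(0,p)$, I would bound each one-step difference
\begin{equation*}
\big|m^\gamma_u(B(x,2^{-j}))-m^\gamma_u(B(x,2^{-j-1}))\big|\le C\phi(2^{-j})\Big(\dashint_{B(x,2^{-j+1})}h_j^\lambda\,d\mu\Big)^{1/\lambda}
\end{equation*}
by the good-set argument of part $(i)$ adapted to the single dyadic scale $2^{-j}$, so that only $h_j$ enters; this single-scale estimate, together with the comparison Lemma \ref{gamma median 1}$(c)$ between concentric balls, is the step where the proof of \cite[Theorem 4.5]{Nuu16} must be followed closely and transferred to $\phi$. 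Summing over $j\ge k$ then telescopes to a bound for $m^\gamma_{|u-c|}(B(x,2^{-k}))$, with $c$ the common Lebesgue-point value, whose right-hand side is the tail $\sum_{j\ge k}\phi(2^{-j})\big(\dashint_{B(x,2^{-j+1})}h_j^\lambda\,d\mu\big)^{1/\lambda}$.

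To recover the required \emph{diagonal} form, with a single $g_k$ at scale $k$ on the right, I would absorb the finer-scale contributions into a restricted maximal operator $\mathcal M$, setting $g_k:=h_k+\sum_{j\ge k}\tfrac{\phi(2^{-j})}{\phi(2^{-k})}\big(\mathcal M(h_j^\lambda)\big)^{1/\lambda}$. Then $g_k\ge h_k$, so $(g_k)\in\mathbb D^\phi(u)$ because the annular constraint at scale $k$ is inherited, while the weights $\phi(2^{-j})/\phi(2^{-k})$ decay geometrically in $j-k$ since $\alpha_\phi\in(0,1)$; choosing $\lambda<p$ makes the Hardy--Littlewood maximal operator bounded on $L^{p/\lambda}$, so $\|(\mathcal M(h_j^\lambda))^{1/\lambda}\|_{L^p}\lesssim\|h_j\|_{L^p}$ and the series is summable. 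The main difficulty I anticipate is exactly this bookkeeping: checking that the constructed $(g_k)$ is simultaneously a genuine $\phi$-Haj\l asz gradient sequence and has its $l^q(L^p)$-norm (for $N^\phi_{p,q}$) or $L^p(l^q)$-norm (for $M^\phi_{p,q}$) controlled by that of $(h_j)$, which is where boundedness of the maximal operator and the geometric summability forced by $\alpha_\phi<1$ must be combined, extending \cite[Theorem 4.5]{Nuu16} and \cite[Lemma 3.1]{KM} from $\phi(t)=t$ to general $\phi\in\mathcal A_0$.
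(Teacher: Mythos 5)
Your part $(i)$ is correct and is exactly the standard argument behind \cite[Lemma 15]{LYY21} (Chebyshev on $g^\lambda$ to carve out a good set of relative measure $\geq 1-\gamma/2$, bounded oscillation there, and $\gamma\leq 1/2<2/3$ to read off the median); note the paper does not reprove this lemma but cites \cite{LYY21} for $(i)$ and defers $(ii)$ to extensions of \cite[Theorem 4.5]{Nuu16} and \cite[Lemma 3.1]{KM}. You are also right that the real content of $(ii)$, although omitted from the statement, is the norm bound $\|(g_k)\|\lesssim\|(h_k)\|$, and your final construction $g_k=h_k+\sum_{j\geq k}\frac{\phi(2^{-j})}{\phi(2^{-k})}(\mathcal M(h_j^\lambda))^{1/\lambda}$ with geometric decay of the weights forced by $\alpha_\phi<1$ is the right one (with one caveat: for the $M^\phi_{p,q}$ case the scalar Hardy--Littlewood bound on $L^{p/\lambda}$ does not control the $L^p(l^q)$ norm; you need the Fefferman--Stein vector-valued maximal inequality, hence $\lambda<\min\{p,q\}$).

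The genuine gap is your intermediate claim that $\bigl|m^\gamma_u(B(x,2^{-j}))-m^\gamma_u(B(x,2^{-j-1}))\bigr|\leq C\phi(2^{-j})\bigl(\dashint_{B(x,2^{-j+1})}h_j^\lambda\,d\mu\bigr)^{1/\lambda}$ with ``only $h_j$ entering.'' This is false: the one-step median difference is controlled by $\inf_c m^{\gamma'}_{|u-c|}(B(x,2^{-j}))$, i.e.\ by the oscillation of $u$ over the \emph{whole} ball, whose pairs occur at every distance below $2^{-j+1}$, while $h_j$ constrains only pairs with $d(y,z)\in[2^{-j-1},2^{-j})$. Concretely, take $X=[0,\tfrac3{10}]\cup[\tfrac12,\tfrac12+\eta]\subset\mathbb R$ with $\mu$ equal to Lebesgue measure on the first interval and $\tfrac{3}{5\eta}$ times Lebesgue measure on the second (uniformly doubling as $\eta\to0$), $u=0$ on the first piece and $u=N$ on the second, and $x=0$, $j=0$: the only pairs with $d(y,z)\in[\tfrac12,1)$ have their left endpoint in $[0,\eta]$, so one may take $h_0$ supported on $[0,\eta]$ with $\dashint_{B(0,2)}h_0^\lambda\lesssim\eta N^\lambda$, yet $m^{1/2}_u(B(0,1))-m^{1/2}_u(B(0,\tfrac12))=N$. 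The telescoping over scales is in any case not where this lemma lives --- it is what the paper performs \emph{later}, in \eqref{generalized Lebesgue pt_Besov}, when applying the lemma, and it produces $|u(x)-m^\gamma_u(B(x,2^{-k}))|$ at Lebesgue points rather than $\inf_c m^\gamma_{|u-c|}$. The correct route for $(ii)$ is a single multi-scale good-set argument on the fixed ball $B=B(x,2^{-k})$: remove exceptional sets $E_j=\{h_j>\epsilon_j^{-1/\lambda}(\dashint_B h_j^\lambda)^{1/\lambda}\}$ simultaneously for all $j\geq k-1$, with $\epsilon_j$ geometrically decaying and $\sum_j\epsilon_j\leq\gamma/2$, so that every pair in the remaining set is controlled at its own scale; this yields $\inf_c m^\gamma_{|u-c|}(B)\leq C\sum_{j\geq k-1}\epsilon_j^{-1/\lambda}\phi(2^{-j})(\dashint_B h_j^\lambda)^{1/\lambda}$, which $\alpha_\phi<1$ collapses to $C\phi(2^{-k})$ times a geometrically weighted sum that your maximal-function $g_k$ then absorbs. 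With that replacement your proof goes through.
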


{\bf{Proof of Theorem \ref{main theorem_Besov}:}} For simplicity, we assume that $R=2^{-m}$ for some $m\in \mathbb{Z}$. Without any loss of generality, we can assume that $x_0\notin E.$ By Lemma \ref{phi capacity lipschitz}, there exists a locally Lipschitz function $v\in N^\phi_{p,q}(X)$ with $v\geq 1$ on a neighbourhood of $E$ which satisfies
\begin{equation}\label{from lemma_Besov}
\|v\|^p_{N^\phi_{p,q}(X)}\leq C \capacity_{N^\phi_{p,q}}(E)+\delta \quad\text{for every}\quad \delta>0.
\end{equation} 
Now, we take a bounded $L$-Lipschitz function $\psi$ which satisfies 
\begin{equation}
\psi=0 \quad\text{on}\quad B(x_0,\frac{1}{4}\dist(x_0,E))\quad\text{and}\quad \psi=1 \quad\text{outside}\quad B(x_0,\frac{1}{2}\dist(x_0,E)). 
\end{equation}
Consider, $u=v\psi $.
It is clear that $u\geq 1$ on $E$ and we claim that there exists  a  $\phi$-Haj\l asz gradient sequence $(g_k)_{k\in \mathbb{Z}}$ of $u$ which takes $0$ in $B(x_0,\frac{1}{4}\dist(x_0,E))$ and satisfies
\begin{equation}\label{claim_Besov}
 \|(g_k)\|_{l^q(L^p(X))}\leq C_1 \|v\|_{N^{\phi}_{p,q}(X)}.
\end{equation}
Proof of the claim:\\
First we will prove that the sequences $(\rho_k)_{k\in \mathbb{Z}}$, $(\sigma_k)_{k\in \mathbb{Z}}$ defined by
\begin{equation}\label{rhok_Besov}
\rho_k(t)=
\begin{cases}
  \|\psi\|_{L^{\infty}(X)}h_k+ \frac{2^{-k}C_I}{\phi(2^{-k-1})}L|v| & \text{if $t\in X\setminus B(x_0,\frac{1}{4}\dist(x_0,E))$},\\
  0& \text{if $t\in B(x_0,\frac{1}{4}\dist(x_0,E))$}
 \end{cases}
\end{equation}
and
\begin{equation}\label{sigmak_Besov}
\sigma_k(t)=
\begin{cases}
  h_k+\frac{2C_I}{\phi(2^{-k-1})}\|\psi\|_{L^{\infty}(X)}|v| & \text{if $t\in X\setminus B(x_0,\frac{1}{4}\dist(x_0,E))$},\\
  0& \text{if $t\in B(x_0,\frac{1}{4}\dist(x_0,E))$}
 \end{cases}
\end{equation}
are $\phi$-Haj\l asz gradient sequences of $u$, where $(h_k)_{k\in \mathbb{Z}}$ is a $\phi$-Haj\l asz gradient sequence of $v$. Firstly, we will assume $y,z\in X\setminus B(x_0,\frac{1}{4}\dist(x_0,E))$ with $2^{-k-1}<d(y,z)\leq 2^{-k}$. The other cases follow similarly.\\
Now, since $\phi $ is almost increasing, 
\begin{eqnarray*}
|u(y)-u(z)|& = & |v(y)\psi(y)-v(z)\psi(z)|\\
&\leq &  |v(y)|\,|\psi(y)-\psi(z)|+|\psi(z)|\,|v(y)-v(z)|
 \\&\leq& |v(y)|\,|Ld(y,z)|+\|\psi\|_{L^{\infty}(X)}\phi(d(y,z))(h_k(y)+h_k(z))\\
&\leq & \phi(d(y,z))(|v(y)|L\frac{2^{-k}C_I}{\phi(2^{-k-1})}+\|\psi\|_{L^{\infty}(X)}(h_k(y)+h_k(z)))\\&\leq&
 \phi(d(y,z))(\rho_k(y)+\rho_k(z)).
 \end{eqnarray*}
 Also, 
 \begin{eqnarray*}
  |u(y)-u(z)|& = &|v(y)\psi(y)-v(z)\psi(z)|\\
 &\leq &|v(y)|\,|\psi(y)-\psi(z)|+|\psi(z)|\,|v(y)-v(z)|\\
 &\leq & 2\|\psi\|_{L^{\infty}(X)}|v(y)|+\phi(d(y,z))(h_k(y)+h_k(z))\\
&\leq & \phi(d(y,z))(2\|\psi\|_{L^{\infty}(X)}|v(y)|\frac{C_I}{\phi(2^{-k-1})}+h_k(y)+h_k(z))\\
 &\leq &\phi( d(y,z))(\sigma_k(y)+\sigma_k(z)).
 \end{eqnarray*}
 Therefore, the sequences $(\rho_k)_{k\in \mathbb{Z}}$ and $(\sigma_k)_{k\in \mathbb{Z}}$ are $\phi$-Haj\l asz gradient sequences of $u$. Now we will prove \eqref{claim_Besov} with the sequence $(g_k)_{k\in \mathbb{Z}}$, where we define, for any $i\in\mathbb{Z},$
\begin{equation}\label{gk}
g_k=\begin{cases}\rho_k &\text{if $ k\geq i$}\\\sigma_k &\text{if $k<i.$}\end{cases}
\end{equation}
In fact,
\begin{eqnarray*}
& & \|(g_k)_{k\in\mathbb{Z}}\|_{l^q(L^p(X))} \\
 &=& \left(\sum_{k\in\mathbb{Z}}\|g_k\|^q_{L^p(X)}\right)^{\frac{1}{q}}\nonumber\\
&\leq & c\left[\left(\sum_{k\geq i}\|h_k+\frac{2^{-k}}{\phi(2^{-k-1})}|v|\,\|^q_{L^p(X)}\right)^{\frac{1}{q}}+\left(\sum_{k<i}\|h_k+\frac{1}{\phi(2^{-k-1})}|v|\,\|^q_{L^p(X}\right)^{\frac{1}{q}}\right]\nonumber\\
&\leq & c\left[\left(\sum_{k\in\mathbb{Z}}\|h_k\|^q_{L^p(X)}\right)^{\frac{1}{q}}+\|v\|_{L^P(X)}\left(\sum_{k\geq i}\frac{2^{-kq}}{\phi(2^{-k-1})^q}\right)^{\frac{1}{q}}+\|v\|_{L^P(X)}\left(\sum_{k<i}\frac{1}{\phi(2^{-k-1})^q}\right)^{\frac{1}{q}}\right]\nonumber\\
&\leq & c\left[\|(h_k)_{k\in \mathbb{Z}}\|_{l^q(L^p(X))}+\|v\|_{L^p(X)}\right],
 \end{eqnarray*}
where in the last inequality we have used the inequalities in \eqref{A_0} after choosing $i$ such that $2^{i-1}\leq L<2^i$. Since $(h_k)$ is a $\phi$-Haj\l asz gradient sequence of $v$, it completes proof of \eqref{claim_Besov}.\\

Let $x\in E,$ $y\in B(x,2^{-m})$ and $z\in B(x_0,\frac{1}{4}\dist(x_0,E)).$ Hence $d(y,z)<2^{-m+2}.$ Therefore there exists $k_0\geq m-2$ such that $ 2^{-k_0-1}\leq d(y,z)<2^{-k_0}.$ So,
\begin{eqnarray*}
1\leq u(x)\leq |u(x)-m^{\gamma}_u(B(x,2^{-k_0}))|+|m^{\gamma}_u(B(x,2^{-k_0}))|.
 \end{eqnarray*}
Since $u$ is continuous, $x$ is a generalized Lebesgue point of $u$. By, properties of $\gamma$-median,
\begin{align*}
|u(x)-m^{\gamma}_u(B(x,2^{-k_0}))|&\leq \sum_{k\geq k_0}|m^{\gamma}_u(B(x,2^{-k-1}))-m^{\gamma}_u(B(x,2^{-k}))|\nonumber\\
&\leq\sum_{k\geq k_0}m^{\gamma}_{|u-m^{\gamma}_u(B(x,2^{-k}))|}(B(x,2^{-k-1}))\nonumber\\
&\leq\sum_{k\geq k_0}m^{\gamma/c_d}_{|u-m^{\gamma}_u(B(x,2^{-k}))|}(B(x,2^{-k}))\nonumber\\
& \leq 2\sum_{k\geq k_0}\inf_{c\in \mathbb{R}}\left[m^{\gamma'}_{|u-c|}(B(x,2^{-k}))+m^{\gamma'}_{|u-c|}(B(x,2^{-k}))\right]\nonumber\\&
\end{align*}
and then from Lemma \ref{phi integral average} we get
\begin{equation}\label{generalized Lebesgue pt_Besov}
|u(x)-m^{\gamma}_u(B(x,2^{-k_0}))|\leq c\sum_{k\geq k_0}\phi(2^{-k})\left(\dashint_{B(x,2^{-k+1})}g^p_k d\mu\right)^{\frac{1}{p}},
\end{equation}
where $\gamma'=\min\{\gamma,\gamma/c_d\}$.
Using the definition of $\phi$-Haj\l asz gradient,
\begin{eqnarray*}
|u(y)|=|u(y)-u(z)|\leq \phi(d(y,z)) g_{k_0}(y).
\end{eqnarray*}
So, for a.e. $y, z$ with $d(y,z)<2^{-k_0},$ $|u(y)|\leq c\phi(2^{-k_0})g_{k_0}(y)$, since $\phi$ is almost increasing. Hence, using the properties of $\gamma$-median we obtain
 \begin{align}\label{gamma median_Besov} |m^{\gamma}_u(B(x,2^{-k_0}))|
&\leq m^\gamma_{c\phi(2^{-k_0})g_{k_0}}(B(x,2^{-k_0})) \nonumber\\
&=c \phi(2^{-k_0}) m^\gamma_{g_{k_0}}(B(x,2^{-k_0}))\nonumber\\
&\leq c\phi(2^{-k_0})\left(\gamma^{-1}\dashint_{B(x,2^{-k_0})}g^p_{k_0}d\mu\right)^{\frac{1}{p}}\nonumber\\
&\leq c\sum_{k\geq k_0}\phi(2^{-k})\left(\gamma^{-1}\dashint_{B(x,2^{-k})}g^p_k d\mu\right)^{\frac{1}{p}}.
\end{align}
Summing \eqref{generalized Lebesgue pt_Besov} and \eqref{gamma median_Besov}
\begin{align}
1 \leq u(x) & \leq c\sum_{k\geq k_0}\phi(2^{-k})\left(\dashint_{B(x,2^{-k+1})}g^p_kd\mu\right)^{\frac{1}{p}} \nonumber\\
& \leq c\left(\sum_{k\geq k_0}h_\omega(2^{-k+1})^{-\frac{1}{p}}\phi(2^{-k})\right)\sup_{k\geq k_0}h_\omega(2^{-k+1})^{\frac{1}{p}}\left(\dashint_{B(x,2^{-k+1})}g^p_kd\mu\right)^{\frac{1}{p}}\nonumber\\
&\leq c\left(\sum_{k\geq k_0}\omega(2^{-k})^{-1}\right)\sup_{k\geq k_0}h_\omega(2^{-k+1})^{\frac{1}{p}}\left(\dashint_{B(x,2^{-k+1})}g^p_kd\mu\right)^{\frac{1}{p}}\nonumber\\
&\leq c \sup_{k\geq k_0}h_\omega(2^{-k+1})^{\frac{1}{p}}\left(\dashint_{B(x,2^{-k+1})}g^p_kd\mu\right)^{\frac{1}{p}}.
\end{align}
Now for every $x\in E$, there is a ball $B(x,2^{-k_x+1})$ with $k_x\geq k_0$ such that 
$$\frac{\mu(B(x,2^{-k_x+1})}{h_\omega(2^{-k_x+1})}\leq c \int_{B(x,2^{-k_x+1})}g^p_{k_x}d\mu .$$ 
By $5B$-covering lemma, there exists a countable family of disjoint balls $B_j=B(x_j,2^{-k_{x_j}+1})$ of radii $r_j=2^{-k_{x_j}+1}\leq 2^{-m+2}$, such that the dilated balls $5B_j$ cover the set $E$. Now we set $I_i=\{j:2^{-i}\leq 5r_j\leq 2^{-i+1}\}$. Then $k_{x_j}=i+3$ for $j\in I_j$. Since $\phi $ is increasing, $\omega$ is admissible, $\mu$ is doubling and the balls are disjoint, we obtain
\begin{align*}
\sum_{j\in I_i}\frac{\mu(5B_j)}{h_\omega(5r_j)}& \leq c\sum_{j\in I_i}\frac{\mu(B_j)}{h_\omega(r_j)}\nonumber\\
& \leq c\sum_{j\in I_i}\int_{B_j}g^p_{i+3}d\mu\nonumber\\
& \leq c \|g_{i+3}\|^p_{L^p(X)},
\end{align*}
where in the penultimate inequality we have used the disjointness of the balls $B_j.$ Summing over $i,$ we get
\begin{align*}
\sum_{2^{-i}<5.2^{-m+2}}\left(\sum_{j\in I_i}\frac{\mu(5B_j)}{h_\omega(5r_j)}\right)^{\frac{q}{p}}&\leq c\sum_{i\in\mathbb{Z}}\|g_{i+3}\|^q_{L^p(X)}
\end{align*}
and hence $\mathcal{H}^{h_\omega,q/p}_{20R}(E)\leq c \left(\sum_{i\in \mathbb{Z}}\|g_{i+3}\|^q_{L^p(X)}\right)^{p/q}.$ Now, taking $\delta\rightarrow 0$  in \eqref{from lemma_Besov} and using \eqref{claim_Besov} we get the desired result. \qed\\

{\bf{Proof of Theorem \ref{main theorem_Triebel}:}} For simplicity, we assume that $R=2^{-m}$ for some $m\in \mathbb{Z}$. Let $E\subset B(x_0,2^{-m})$ be a compact set. Without any loss of generality, we can assume that $x_0\notin E.$ By Lemma \ref{phi capacity lipschitz} there exists a locally Lipschitz function $v\in M^\phi_{p,q}(X)$ with $v\geq 1$ in a neighbourhood of $E$ which satisfies
\begin{equation}\label{from lemma_Triebel}
\|v\|^p_{M^\phi_{p,q}(X)}\leq C \capacity_{M^\phi_{p,q}}(E)+\delta \quad\text{for every}\quad \delta>0.
\end{equation} 
Now, we take a bounded $L$-Lipschitz function $\psi$ which satisfies 
\begin{equation}
\psi=0 \quad\text{on}\quad B(x_0,\frac{1}{4}\dist(x_0,E))\quad\text{and}\quad \psi=1 \quad\text{outside}\quad B(x_0,\frac{1}{2}\dist(x_0,E)). 
\end{equation}
Consider, $u=v\psi $.
It is clear that $u\geq 1$ on $E$ and we claim that there exists  a  $\phi$-Haj\l asz gradient sequence $(g_k)_{k\in \mathbb{Z}}$ of $u$ which takes $0$ in $B(x_0,\frac{1}{4}\dist(x_0,E))$ and satisfies
\begin{equation}\label{claim_Triebel}
 \|(g_k)\|_{L^p(X, l^q)}\leq C_1 \|v\|_{M^{\phi}_{p,q}(X)}.
\end{equation}
In fact, the sequence $\{g_k\}$ defined in \eqref{gk} is the $\phi$-Haj\l asz gradient satisfies \eqref{claim_Triebel}:
\begin{eqnarray*}
& & \left(\sum_{k\in\mathbb{Z}}g_k^q\right)^{1/q}\\
&\leq & c\left[\left(\sum_{k\geq i}(h_k+\frac{2^{-k}}{\phi(2^{-k-1})}|v|\,)^q\right)^{\frac{1}{q}}+\left(\sum_{k<i}(h_k+\frac{1}{\phi(2^{-k-1})}|v|\,)^q\right)^{\frac{1}{q}}\right]\nonumber\\
&\leq & c\left[\left(\sum_{k\in\mathbb{Z}}h_k^q\right)^{\frac{1}{q}}+|v|\left(\sum_{k\geq i}\frac{2^{-kq}}{\phi(2^{-k-1})^q}\right)^{\frac{1}{q}}+|v|\left(\sum_{k<i}\frac{1}{\phi(2^{-k-1})^q}\right)^{\frac{1}{q}}\right]
 \end{eqnarray*}
 and hence
\begin{equation*}
\|g_k\|_{L^p(X,l^q)}\leq c \left[\|(h_k)_{k\in\mathbb{Z}}\|_{L^p(X,l^q)} + \|v\|_{L^p(X)}\right],
\end{equation*}
where in the last inequality we have used the inequalities in \eqref{A_0} after choosing $i$ such that $2^{i-1}\leq L<2^i$.\\

Let $x\in E,$ $y\in B(x,2^{-m})$ and $z\in B(x_0,\frac{1}{4}\dist(x_0,E)).$ Hence $d(y,z)<2^{-m+2}.$ Therefore there exists $k_0\geq m-2$ such that $ 2^{-k_0-1}\leq d(y,z)<2^{-k_0}.$ So,
\begin{eqnarray*}
1\leq u(x)\leq |u(x)-m^{\gamma}_u(B(x,2^{-k_0}))|+|m^{\gamma}_u(B(x,2^{-k_0}))|.
 \end{eqnarray*}
Since $u$ is continuous, $x$ is a generalized Lebesgue point of $u$. By, properties of $\gamma$-median,
\begin{align*}
|u(x)-m^{\gamma}_u(B(x,2^{-k_0}))|&\leq \sum_{k\geq k_0}|m^{\gamma}_u(B(x,2^{-k-1}))-m^{\gamma}_u(B(x,2^{-k}))|\nonumber\\
&\leq\sum_{k\geq k_0}m^{\gamma}_{|u-m^{\gamma}_u(B(x,2^{-k}))|}(B(x,2^{-k-1}))\nonumber\\
&\leq\sum_{k\geq k_0}m^{\gamma/c_d}_{|u-m^{\gamma}_u(B(x,2^{-k}))|}(B(x,2^{-k}))\nonumber\\
& \leq 2\sum_{k\geq k_0}\inf_{c\in \mathbb{R}} m^{\gamma'}_{|u-c|}(B(x,2^{-k}))\nonumber\\&
\end{align*}
and then from Lemma \ref{phi integral average} we get
\begin{equation}\label{generalized Lebesgue pt_Triebel}
|u(x)-m^{\gamma}_u(B(x,2^{-k_0}))|\leq c\sum_{k\geq k_0}\phi(2^{-k})\left(\dashint_{B(x,2^{-k+1})}g^p_k d\mu\right)^{\frac{1}{p}},
\end{equation}
where $\gamma'=\min\{\gamma,\gamma/c_d\}$.
Using the definition of $\phi$-Haj\l asz gradient,
\begin{eqnarray*}
|u(y)|=|u(y)-u(z)|\leq \phi(d(y,z)) g_{k_0}(y).
\end{eqnarray*}
So, for a.e. $y, z$ with $d(y,z)<2^{-k_0},$ $|u(y)|\leq c\phi(2^{-k_0})g_{k_0}(y)$, since $\phi$ is almost increasing.
Use inequality \eqref{for p less than 1} when $q\leq 1$ and H\"older's inequality when $q\geq 1$ to obtain, for a.e. $y\in B(x,2^{-m}),$
\[
|u(y)|\leq C\phi(2^{-k_0})\big(\sum_{j\geq k_0} g_j(y)^q\big)^{1/q}
\]
 Hence, using the properties of $\gamma$-median we obtain
 \begin{align}\label{gamma median_Triebel} |m^{\gamma}_u(B(x,2^{-k_0}))|
&\leq m^\gamma_{c\phi(2^{-k_0})\|(g_j)\|_{l^q}}(B(x,2^{-k_0})) \nonumber\\
&=c \phi(2^{-k_0}) m^\gamma_{\|(g_j)\|_{l^q}}(B(x,2^{-k_0}))\nonumber\\
&\leq c\phi(2^{-k_0})\left(\gamma^{-1}\dashint_{B(x,2^{-k_0})}\|(g_j)\|_{l^q}^pd\mu\right)^{\frac{1}{p}}\nonumber\\
&\leq c\sum_{k\geq k_0}\phi(2^{-k})\left(\gamma^{-1}\dashint_{B(x,2^{-k})}\|(g_j)\|_{l^q}^p d\mu\right)^{\frac{1}{p}}.
\end{align}
Summing \eqref{generalized Lebesgue pt_Triebel} and \eqref{gamma median_Triebel}
\begin{align}
1 \leq u(x) & \leq c\sum_{k\geq k_0}\phi(2^{-k})\left(\dashint_{B(x,2^{-k+1})}\|(g_j)\|_{l^q}^p d\mu\right)^{\frac{1}{p}} \nonumber\\
& \leq c\left(\sum_{k\geq k_0}h_\omega(2^{-k+1})^{-\frac{1}{p}}\phi(2^{-k})\right)\sup_{k\geq k_0}h_\omega(2^{-k+1})^{\frac{1}{p}}\left(\dashint_{B(x,2^{-k+1})}\|(g_j)\|_{l^q}^p d\mu\right)^{\frac{1}{p}}\nonumber\\
&\leq c\left(\sum_{k\geq k_0}\omega(2^{-k})^{-1}\right)\sup_{k\geq k_0}h_\omega(2^{-k+1})^{\frac{1}{p}}\left(\dashint_{B(x,2^{-k+1})}\|(g_j)\|_{l^q}^pd\mu\right)^{\frac{1}{p}}\nonumber\\
&\leq c \sup_{k\geq k_0}h_\omega(2^{-k+1})^{\frac{1}{p}}\left(\dashint_{B(x,2^{-k+1})}\|(g_j)\|_{l^q}^pd\mu\right)^{\frac{1}{p}}.
\end{align}
Now for every $x\in E$, there is a ball $B(x,2^{-k_x+1})$ with $k_x\geq k_0$ such that 
$$\frac{\mu(B(x,2^{-k_x+1})}{h_\omega(2^{-k_x+1})}\leq c \int_{B(x,2^{-k_x+1})}\|(g_j)\|_{l^q}^p d\mu .$$ 
By $5B$-covering lemma, there exists a countable family of disjoint balls $B_j=B(x_j,2^{-k_{x_j}+1})$ of radii $r_j=2^{-k_{x_j}+1}$, such that the dilated balls $5B_j$ cover the set $E$.
Now, summing over $i$ and using the disjointness of the balls, doubling condition, increasing property of $\phi$ as well as the admissible property of $\omega,$ we obtain
\begin{align*}
\sum_{j}\frac{\mu(5B_j)}{h_\omega(5r_j)}& \leq c\sum_{j}\frac{\mu(B_j)}{h_\omega(r_j)}\nonumber\\
& \leq \Vert g_j \Vert^p_{L^p(X, l^q)}.
\end{align*}
and hence $H^{\phi}_{c2^{-m}}(E)\leq C\Vert g_j \Vert^p_{L^p(X, l^q)}.$ Then by taking $\delta\rightarrow 0$ in \eqref{from lemma_Triebel} and applying \eqref{claim_Triebel} we obtain the desired result. \qed

{\bf{Proof of Theorem \ref{main theorem_Sobolev}:}}
For simplicity, let us assume that $R=2^{-m}$ for some $m\in\mathbb{Z}.$ Without loss of generality, we can assume that $x_0\notin E.$ 
By Lemma \ref{locally lipschitz}, for any $\delta>0,$ there exists a $\phi$-Lipschitz function $v\in M^{\phi,p}(X)$ such that $v=1$ on a neighbourhood of $E$ and 
\begin{equation}\label{epsilon}
\|v\|^p_{M^{\phi,p}(X)}< \capacity_{M^\phi_{p,\infty}}(E)+\delta .
\end{equation}
Now, take a bounded $L$-Lipschitz function $\psi$ which satisfies 
\begin{equation}
\psi=0 \quad\text{on}\quad B(x_0,\frac{1}{4}\dist(x_0,E))\quad\text{and}\quad \psi=1 \quad\text{outside}\quad B(x_0,\frac{1}{2}\dist(x_0,E)). 
\end{equation}
Consider, $u=v\psi $.\\
So, $u=1$ on $E$ and we claim that there exists a $\phi$-Haj\l asz gradient of $u$ which takes $0$ in $B(x_0,\frac{1}{4}\dist(x_0,E))$ and satisfies 
\begin{equation}\label{gradient inequality}
\|g\|_{L^p(X)}\leq c\|v\|_{M^{\phi,p}(X)}.
\end{equation}
In fact, for any $\phi$-Haj\l asz gradient of $h$ of $v,$ the function
\begin{equation}\label{gradient u}
g(t)=
\begin{cases}
  \|\psi\|_{L^\infty(X)}h+[\|\psi\|_{L^\infty(X)}+1][\phi(L^{-1})]^{-1}|v|  & \text{if $t\in X\setminus B(x_0,\frac{1}{4}\dist(x_0,E))$},\\
  0& \text{if $t\in B(x_0,\frac{1}{4}\dist(x_0,E))$}
 \end{cases}
\end{equation}
is a $\phi$-Haj\l asz gradient of $u.$ This can be proved by defining $g_k$ as in \eqref{gk} with $h_k$ replaced by $h$ therein, choosing $i$ with $2^{i-1}\leq L<2^i$ and by taking $g=\sup_k g_k.$\\
 Let $x\in E,$ $y\in B(x,2^{-m})$ and $z\in B(x_0,\frac{1}{4}\dist(x_0,E)).$ So, $d(y,z)\leq 2^{-m+1}.$ Since $u$ is continuous, $x$ is a Lebesgue point of $u.$ Using properties of $\gamma$-median, doubling property of $\mu$ and Lemma \ref{phi integral average} in a similar fashion to the estimate \eqref{generalized Lebesgue pt_Triebel}, we get
\begin{align}\label{phi generalized Lebesgue pt_Sobolev}
|u(x)-m^{\gamma}_u(B(x,2^{-m+1}))|&\leq c\sum_{k\geq m-1}\phi(2^{-k})\left(\dashint_{B(x,2^{-k+1})}g^p d\mu\right)^{\frac{1}{p}}.
\end{align}
Using the definition of $\phi$-Haj\l asz gradient,
\begin{eqnarray*}
|u(y)|=|u(y)-u(z)|\leq \phi(d(y,z)) g(y)
\end{eqnarray*}
and hence similar to \eqref{gamma median_Triebel}, we have
\begin{align}\label{phi gamma median} |m^{\gamma}_u(B(x,2^{-m+1}))|
\leq c\sum_{k\geq m-1}\phi(2^{-k})\left(\dashint_{B(x,2^{-k+1})}g^p d\mu\right)^{\frac{1}{p}}.
\end{align}
Summing \eqref{phi generalized Lebesgue pt_Sobolev} and \eqref{phi gamma median}
\begin{align}
1 \leq u(x) & \leq c\sum_{k\geq m-1}\phi(2^{-k})\left(\dashint_{B(x,2^{-k+1})}g^pd\mu\right)^{\frac{1}{p}} \nonumber\\
& \leq c\left(\sum_{k\geq m-1}h_\omega(2^{-k+1})^{-\frac{1}{p}}\phi(2^{-k})\right)\sup_{k\geq m-1}h_\omega(2^{-k+1})^{\frac{1}{p}}\left(\dashint_{B(x,2^{-k+1})}g^pd\mu\right)^{\frac{1}{p}}\nonumber\\
&\leq c\left(\sum_{k\geq m-1}\omega(2^{-k})^{-1}\right)\sup_{k\geq m-1}h_\omega(2^{-k+1})^{\frac{1}{p}}\left(\dashint_{B(x,2^{-k+1})}g^pd\mu\right)^{\frac{1}{p}}\nonumber\\
&\leq c \sup_{k\geq m-1}h_\omega(2^{-k+1})^{\frac{1}{p}}\left(\dashint_{B(x,2^{-k+1})}g^pd\mu\right)^{\frac{1}{p}}.
\end{align}
Now for  every $x\in E$, there is a ball $B(x,2^{-k_x+1})$ with $k_x\geq m-1$ such that 
$$\frac{\mu(B(x,2^{-k_x+1})}{h_\omega(2^{-k_x+1})}\leq c \int_{B(x,2^{-k_x+1})}g^pd\mu .$$
By $5B$-covering lemma, pick a collection of disjoint balls $B_i=B(x_i,2^{-k_{x_i}+1})$ of radii $r_i=2^{-k_{x_i}+1}\leq 2^{-m+2}$ such that $E\subset \cup_i5B_i.$ Now, summing over $i$ and using the disjointness of the balls, doubling condition as well as the increasing property of $\phi,$ we obtain 
\begin{align*}
\|g\|^p_{L^p(X)}&\geq c \sum_i\frac{\mu(B_i)}{h_\omega(r_i)}\nonumber\\
& \geq c \sum_i\frac{\mu(5B_i)}{h_\omega(5r_i)}\nonumber
\end{align*}
and hence $H^{h_\omega}_{5\cdot 2^{-m+2}}\leq c\|g\|^p_{L^p(X)}.$ Taking $\delta\rightarrow 0$ in \eqref{epsilon} and applying \eqref{gradient inequality} we obtain desired result. \qed

\vspace{1cm}
\smallskip
{\small Nijjwal Karak}\\
\small{Department of Mathematics,}\\
\small{Birla Institute of Technology and Science-Pilani, Hyderabad Campus,}\\
\small{Hyderabad-500078, India} \\
{\tt nijjwal@gmail.com ; nijjwal@hyderabad.bits-pilani.ac.in}\\
\\
{\small Debarati Mondal}\\
\small{Department of Mathematics,}\\
\small{Birla Institute of Technology and Science-Pilani, Hyderabad Campus,}\\
\small{Hyderabad-500078, India} \\
{\tt p20200038@hyderabad.bits-pilani.ac.in}
\end{document}